\newcommand{\mc}{\mathcal}
\renewcommand{\d}{\,\mathrm{d}}
\newcommand{\tr}{\mathrm{tr}}
\newcommand{\dom}{\mathrm{dom}\,}
\renewcommand{\div}{\mathrm{div}\,}
\DeclareMathOperator*{\dist}{dist}
\DeclareMathOperator*{\sym}{sym}
\newcommand\R{\mathbb R}
\renewcommand{\to}{\rightarrow}
\numberwithin{equation}{section}
\newtheorem{thm}{Theorem}[section]
\newtheorem{prop}[thm]{Proposition}
\theoremstyle{definition}
\newtheorem{rmk}[thm]{Remark}
\theoremstyle{remark}
\title[Dimension reduction with pressure loads]{The effects of pressure  loads in the dimension reduction of elasticity models}
\author[M. Kru\v z\'ik and F. Riva]{Martin Kru\v z\'ik and Filippo Riva}
\address[M. Kru\v z\'ik]{Institute of Information Theory and Automation, Czech Academy of Sciences, Pod vod\'arenskou ve\v z\'i 4, CZ-182 00, Prague 8, Czech Republic}
\email{kruzik@utia.cas.cz}
\address[F. Riva]{Dipartimento di Scienze delle Decisioni, Universit\`{a} Commerciale Luigi Bocconi, via Roentgen 1, 20136 Milano, Italy}
\email{filippo.riva@unibocconi.it}
\begin{document}

	\begin{abstract}
		
		We study the dimensional reduction from three to two dimensions in hyperelastic materials subject to a live load, modeled as a constant pressure force. Our results demonstrate that this loading has a significant impact in higher-order scaling regimes, namely those associated with von Kármán-type theories, where a nontrivial interplay arises between the elastic energy and the pressure term. In contrast, we rigorously show that in lower-order bending regimes, as described by Kirchhoff-type theories, the pressure load does not influence the minimizers. Finally, after identifying the corresponding $\Gamma$-limit, we conjecture that a similar independence from the pressure term persists in the most flexible membrane regimes.  
	\end{abstract}
	
	\maketitle
	
	{\small
		\keywords{\noindent {\bf Keywords:} Gamma-convergence, nonlinear elasticity, pressure live loads, membranes, Kirchhoff theory, von K\'arm\'an theory.
		}
		\par
		\subjclass{\noindent {\bf 2020 MSC:} 
			70G75,	
			74K15,  
			74B20, 
			74K20.	
			
		}
	}
	
	\pagenumbering{arabic}
	
	\medskip
	
	\tableofcontents
	
	\section*{Introduction}
	Dimension reduction represents an important class of problems in mathematical analysis, numerics, and engineering. In elasticity, it is often used to approximate the behavior of elastic bodies by reducing their dimensionality, passing from three dimensions (3D) to lower-dimensional models, such as two-dimensional (2D) or one-dimensional (1D), based on suitable assumptions about the geometry and the deformation of the structure. For example, thin plates can be approximated by 2D models, while slender beams can be reduced to 1D objects. These reduced models are not only much more computationally efficient but also provide valuable insights into the underlying physics of the system.  The key point is that  the lower-dimensional structures are derived from the full-dimensional models in a rigorous way. For static problems, this is usually done by means of $\Gamma$-convergence \cite{braides,dalmaso}. 
	
	Regarding bidimensional systems, starting from the celebrated works \cite{LeDretRaoult, friesecke.james.mueller1, friesecke.james.mueller2} where a whole hierarchy of plate models has been derived from nonlinear elasticity, the analysis has been subsequently extended in many different directions. For instance, the current understanding encompasses prestrained \cite{padillagarza}, magnetic \cite{bresciani}, brittle \cite{almi.reggiani.solombrino}, or martensitic \cite{Tolottimulti} materials. Moreover, in an evolutive framework, dimension reduction for dynamic models has been investigated in \cite{AbelsMoraMuller}, while viscoelastic or thermoviscoelastic effects are taken into account in \cite{friedkruz} and \cite{Badfriedmach}, respectively. We finally mention \cite{lecumberry.mueller} and the recent paper \cite{Tolotti}, where the issue of stability of thin bodies with respect to volume forces is addressed. 
	
	Despite such extensive developments, in most cases, external loads are considered to be independent of deformations, which significantly simplifies the mathematical discussion, but is definitely a rare situation in practical applications. Such forces are commonly termed \emph{dead loads}, in contrast to the more mechanically relevant ones, the so-called \emph{live loads} \cite{ciarlet, PodGuid}, which actually depend on the deformed configuration of the body. The technical advantage of considering dead loads over live ones lies in their simpler mathematical structure: dead loads are typically represented by linear bounded functionals, whereas live loads involve more intricate, nonlinear formulations. For this reason, up to our best knowledge, variational asymptotic analysis of models including a live load has been performed just recently in \cite{MoraRiva} in the framework of linearization problems. Therein, the authors focus on boundary pressure forces, starting from the observations raised in \cite{MaorMora} where Neumann boundary conditions, which share some similarities with pressure, are considered. However, we are not aware of dimension reduction results for models influenced by live loads.
	
	In this contribution, we provide rigorous derivations of 2D elasticity theories, in the spirit of \cite{friesecke.james.mueller2}, encompassing constant-pressure live loads on the whole Neumann part of the boundary. Such setting may for instance simulate a small thin elastic body completely immersed in a fluid, hence experiencing hydrostatic pressure. Considering a more general non-constant pressure (as in the linearization procedure developed in \cite{MoraRiva}) would pose serious additional difficulties in the dimension reduction analysis, so we prefer to postpone such a case to future works and to limit ourselves to the constant pressure scenario, which still provides interesting and unexpected outcomes. Note also that applying a pressure force only on a {\it part} of the Neumann boundary  is a non-potential load and therefore cannot be treated in a variational setting. These problems lead to the so-called flutter phenomenon, see, e.g., \cite{shubov}.
	
	Our starting point is a three-dimensional energy functional 
	\begin{equation*}
		I_h^{\alpha,\pi}(w)=
		\int_{\Omega_h} W(\nabla w(x))\d x +h^\alpha\pi\int_{\Omega_h}\det\nabla w(x)\d x,
	\end{equation*}
	where $\Omega_h=S\times(-h/2,h/2)$ is a thin cylinder with mid-plane $S\subseteq \R^2$ and thickness $h>0$, representing the reference configuration of a hyperelastic body, while $w\colon \Omega_h\to \mathbb{R}^3$ denotes its deformation. The first integral above describes the nonlinear elastic energy of the material, while the second integral is the potential of a constant applied pressure load \cite{ciarlet}, with intensity $h^\alpha\pi\in \R$, acting on the boundary of the deformed configuration $w(\Omega_h)$. 
	Roughly speaking, the limiting planar model is described by the $\Gamma$-limit of the rescaled energy ${h^{-1-2\alpha}}I_h^{\alpha,\pi}$ as the width $h$ of the material tends to zero. The parameter $\alpha\ge 0$, which tunes the intensity of pressure, is responsible for different resulting two-dimensional theories, as it happens in the known pressureless cases. The case $\alpha=0$ corresponds to the so-called membrane regime \cite{LeDretRaoult}, while $\alpha=1$ and $\alpha\in(1,2)$ lead to the nonlinear \cite{friesecke.james.mueller1} and to the linearized bending regime \cite{friesecke.james.mueller2}, respectively. The higher scalings $\alpha=2$ and $\alpha>2$ finally describe the von K\'arm\'an and the linearized von K\'arm\'an regime \cite{friesecke.james.mueller2}, respectively.
	
	Our results show that the presence of pressure surprisingly affects only the latter more rigid scalings, while the flexible ones behave exactly as if pressure was not acting, in terms of minimizers. In particular, for $\alpha\ge 2$ we prove that, with respect to a suitable topology, the $\Gamma$-limit
	\begin{equation*}
		\frac{1}{h^{1+2\alpha}}I_h^{\alpha,\pi}\xrightarrow[h\to 0]{\Gamma}\mc E^\alpha_{\rm vK}+\pi \mathbb Q^\alpha+\pi^2c^\alpha,
	\end{equation*}
	holds true, where $\mc E^\alpha_{\rm vK}$ is the known von K\'arm\'an functional (linearized for $\alpha>2$) in absence of pressure, depending on the horizontal and vertical displacements $(u,v)\colon S\to \mathbb R^2\times\mathbb R$ of the plate, $c^\alpha$ is a constant, while $\mathbb  Q^\alpha$ is a nontrivial quadratic form directly connected to the elastic energy density $W$ (see the term multiplied by $\pi$ in \eqref{eq:vKenergypi} and \eqref{eq:vKlinenergypi}). In particular, minimizers of the limit energy are influenced by the pressure load via the operator $\mathbb Q^\alpha$, and so they differ from the ones of the pure von K\'arm\'an energy $\mc E^\alpha_{\rm vK}$. For instance, in the specific case of homogeneous isotropic materials, described in a linearized setting via the Lam\'e coefficients $\mu$ and $\lambda$, we can explicitly compute
	
	\begin{equation*}
		\mathbb Q^\alpha(u,v)=\begin{cases}\displaystyle
			\frac{2\mu}{2\mu+\lambda}\int_S\left(\div' u(x')+\frac 12|\nabla' v(x')|^2\right)\d x',&\text{if }\alpha=2,\\\displaystyle
			\frac{2\mu}{2\mu+\lambda}\int_S\div' u(x')\d x',&\text{if }\alpha>2.
		\end{cases}
	\end{equation*}
	On the contrary, for $\alpha\in[1,2)$ we show that
	\begin{equation*}
		\frac{1}{h^{1+2\alpha}}I_h^{\alpha,\pi}\xrightarrow[h\to 0]{\Gamma}\mc E^\alpha_{\rm ben}(u,v)+c_\pi,
	\end{equation*}
	where $\mc E^\alpha_{\rm ben}$ is the known bending energy (linearized for $\alpha\in(1,2)$) of Kirchoff theory in absence of pressure, and $c_\pi$ is a constant. This implies that, in the limit, equilibrium configurations in presence of pressure precisely coincide with the ones of the pressureless framework.
	
	Finally, in the membrane regime $\alpha=0$, we prove that
	\begin{equation*}
		\frac{1}{h}I_h^{0,\pi}\xrightarrow[h\to 0]{\Gamma}  \mc E^\pi_{\rm mem}(y)=\begin{cases}
			\displaystyle\int_S\mc Q\big(W(\cdot)+\pi\det(\cdot)\big)_0(\nabla' y(x'))\d x', &\text{ if }\partial_3y=0\\
			+\infty, & \text{ otherwise,}
		\end{cases}
	\end{equation*}
	where $\mc Q$ denotes the quasiconvex envelope, while the subscript $0$ indicates the procedure of minimizing with respect the third column. Inspired by the previous considerations which suggest that pressure effects should just be visible at high scalings, we conjecture that
	\begin{equation}\label{eq:conjecture}
		\mc Q\big(W(\cdot)+\pi\det(\cdot)\big)_0= \mc Q\big(W)_0+c_\pi,
	\end{equation}
	so that minimizers of $\mc E^\pi_{\rm mem}$ are actually not affected by $\pi$. Although we are not able to prove the validity of such conjecture, in Section \ref{sec:examplemembrane} we propose an explicit example which indicates its reliability, at least for positive pressure $\pi>0$.
	
	The paper is organized as follows. Section \ref{sec:setting} presents in details the three-dimensional hyperelastic model and fixes the main assumptions. After a brief recall of the known dimension reduction results in the pressureless framework $\pi=0$, we state our findings in Theorem \ref{thm:main}, where all the different regimes are considered. In Section \ref{sec:membrane} we analyze the membrane regime $\alpha=0$ and we state our conjecture \eqref{eq:conjecture}. Section \ref{sec:highscalings} is finally devoted to the proof of the $\Gamma$-convergence results both in the bending ($\alpha\in [1,2)$) and in the von K\'arm\'an-type ($\alpha\ge 2$) regimes.
	
	\subsection*{Notations}
	
	We adopt standard notations for Lebesgue and Sobolev spaces. 
	Throughout the paper, the symbols $C$ or $c$ will be used to indicate some positive constants not depending of $h$, whose value may change from line to line.
	
	The maximum (resp. minimum) of two extended real numbers $\alpha,\beta\in \R\cup\{\pm\infty\}$ is denoted by $\alpha\vee\beta$ (resp. $\alpha\wedge\beta$). Without risk of ambiguity, by $\xi\wedge\zeta\in \R^3$ we also mean the cross (or wedge) product between the vectors $\xi\in \R^3$ and $\zeta\in \R^3$. The scalar product between $\xi$ and $\zeta$ is instead indicated by $\xi\cdot\zeta$.
	
	We denote by $\R^{n\times n}$ and $\R^{n\times n}_{\rm sym}$ the set of $n\times n$ matrices and its subset of symmetric matrices. Given a matrix $G\in \R^{n\times n}$, its symmetric part $(G+G^T)/2$ is indicated by $\operatorname{sym} G$. The set of rotations is denoted by $SO(n)$, namely
	\begin{equation*}
		SO(n)=\{R\in\R^{n\times n}: \ R^TR=I, \ \det R=1\}.
	\end{equation*}
	Given a matrix $F\in \R^{3\times 3}$, we will often write $F=(F_1|F_2|F_3)$, where $F_i$ denotes the $i$-th column of $F$.
	
	We finally recall that for every $F=(f_{ij})_{i,j}\in \R^{3\times 3}$ and $h>0$ the following expansion of the determinant holds
	\begin{equation}\label{eq:sviluppodet}
		\det(I+h F)=1+h\tr F+h^2\iota_2(F)+h^3\det F,
	\end{equation}
	where 
	\begin{equation}\label{eq:iota}
		\iota_2(F)=f_{11}f_{22}-f_{12}f_{21}+f_{11}f_{33}-f_{13}f_{31}+f_{22}f_{33}-f_{23}f_{32}.
	\end{equation}	
	\section{Setting and main results}\label{sec:setting}
	
	Let $\Omega_h=S\times(-h/2,h/2)$ be the reference configuration of a thin three-dimensional elastic body of width $h\in(0,1]$, where $S$ is a bounded Lipschitz domain in $\R^2$. For lightness of notations, we will write $\Omega$ in place of $\Omega_1$.
	
	By varying the parameter $\alpha\ge 0 $, which will modulate the intensity of the applied pressure, let us consider the total energy functionals $I_h^{\alpha,\pi}\colon W^{1,2}(\Omega_h;\R^3)\to (-\infty,+\infty]$ defined as
	\begin{equation}\label{eq:I}
		I_h^{\alpha,\pi}(w)=\begin{cases}\displaystyle
			\int_{\Omega_h} W(\nabla w(x))\d x +h^\alpha\pi\int_{\Omega_h}\det\nabla w(x)\d x,& \text{ if }w\in \dom I_h^{\alpha,\pi},\\
			+\infty, &\text{otherwise,}
		\end{cases}
	\end{equation}
	whose domain $\dom I_h^{\alpha,\pi}$ consists of those deformations $w\in  W^{1,2}(\Omega_h;\R^3)$ such that both $W(\nabla w)$ and the Jacobian $\det\nabla w$ are summable in $\Omega_h$. The first integral above represents the elastic bulk energy stored in the material, and the function $W\colon \R^{3\times 3}\to (-\infty,+\infty]$ is the stored energy density. On the other hand, the second integral is the potential of a constant applied pressure load, with intensity $h^\alpha\pi\in \R$, acting on the boundary of the deformed configuration. Indeed, it is well known \cite{ciarlet,PodGuid} that this term is a null Lagrangian, namely it does not alter the Euler-Lagrange equations of \eqref{eq:I} which still read as 
	\begin{equation*}
		-\operatorname{div}\partial_F W(\nabla w(x))=0,\qquad \text{in }\Omega_h,
	\end{equation*}
	but it affects their boundary conditions, which can be written in the reference configuration as
	\begin{equation*}
		\partial_F W(\nabla w(x))n_{\partial \Omega_h}(x)=-h^\alpha \pi \,{\rm cof}\,\nabla w(x)n_{\partial \Omega_h}(x), \qquad\text{for }x\in \partial\Omega_h,
	\end{equation*}
	where ${\rm cof}\,F$ denotes the cofactor of the matrix $F$ and $n_{\partial\Omega_h}$ is the outward unit normal to $\partial\Omega_h$. Notice that in the deformed configuration the right-hand side above reads as
	\begin{equation}\label{eq:bdrycond}
		-h^\alpha \pi n_{\partial(w(\Omega_h))}(z),\qquad\text{for }z\in \partial(w(\Omega_h)),
	\end{equation}
	namely it represents exactly a pressure force with intensity $h^\alpha\pi$ acting on  $\partial(w(\Omega_h))$ in the normal direction. Since the behaviour of such loads depends on the deformed configuration, we recall that pressure forces fall within the class of so-called \emph{live loads} \cite{PodGuid}.
	
	As it is customary in dimension reduction problems, it is useful to reformulate the system in a fixed domain. We thus perform the change of variables
	\begin{equation*}
		{id}_h:\Omega\to\Omega_h,\quad{id}_h(x):=(x_1,x_2,hx_3),
	\end{equation*}
	and, setting $y:=w\circ {id}_h$, in order to normalize volume effects we consider the rescaled energy $\mc E^{\alpha,\pi}_h\colon W^{1,2}(\Omega;\R^3)\to (-\infty,+\infty] $ defined as
	\begin{equation}\label{eq:energy}
		\mc E^{\alpha,\pi}_h(y):=\frac{1}{h}I^{\alpha,\pi}_h(w)=\begin{cases}\displaystyle
			\int_{\Omega}W(\nabla_h y(x)) \d x +h^\alpha\pi\int_\Omega\det\nabla_h y(x)\d x,& \text{if } y\in \dom \mc E^{\alpha,\pi}_h,\\
			+\infty, &\text{otherwise,}
		\end{cases}
	\end{equation}
	where $\dom \mc E^{\alpha,\pi}_h$ is defined analogously to $\dom I^{\alpha,\pi}_h$, with the obvious changes. Above, the rescaled gradient $\nabla_h$ denotes the differential operator defined as
	\begin{equation*}
		\nabla_h y :=\left(\partial_1 y \big|\partial_2 y \big|\frac{\partial_3 y }{h}\right).
	\end{equation*}

	In order to perform the asymptotic analysis of \eqref{eq:energy} as $h\to 0$, we require that the density $W$ satisfies the following standard properties for all $F\in \R^{3\times 3}$:
	\begin{subequations}\label{hyp:bending}
		\begin{align}
			&\bullet\, W(RF)=W(F)\text{ for all }R\in SO(3);\quad \text{ (frame indifference)} \\
			&\bullet\, W(F)=0\quad\iff\quad F\in SO(3); \quad \text{ (stress-free reference configuration)}\label{eq:minSO3}\\
			&\bullet\, W(F)\ge  c \dist(F;SO(3))^2 \quad \text{ (coercivity)}\label{hyp:Wdist}\\			
			&\bullet\, W \text{ is of class $C^2$ in a neighborhood of $SO(3)$ with bounded second derivatives.}
		\end{align}
	\end{subequations}
	We also recall that the growth condition \eqref{hyp:Wdist} implies
	\begin{equation}\label{eq:growthdet}
		W(F)\ge c|\det F-1|^2,\text{ whenever }|\det F-1|\le 1.
	\end{equation}
	
	Additionally, in order to handle the pressure term in the energy functional, we need to assume either positivity of $\pi$ together with the orientation preserving condition of $W$, namely
	\begin{equation}\label{eq:orientationpreserving}
		\pi>0 \quad\text{ and }\quad W(F)=+\infty\text{ if } \det F\le 0,\\
	\end{equation}
	or a linear control of $W$ on the determinant, completing \eqref{eq:growthdet}, of the form
	\begin{equation}\label{hyp:pineg}
		W(F)\ge c|\det F-1|,\text{ whenever }|\det F-1|>1.
	\end{equation}
	We point out that both the above conditions on $W$ are very natural and physically relevant, and often satisfied at the same time. However, from the mathematical point of view, the orientation preserving condition alone is enough to handle a positive pressure, while for negative pressures the control on the determinant \eqref{hyp:pineg} by the density $W$ is crucial. This will be clear from the proof of the compactness result stated in Proposition~\ref{prop:bounds}.    
	
	As in the classical case with no pressure ($\pi=0$), different assumptions on $W$ are needed in the so-called membrane regime $\alpha=0$ \cite{Hafsa.Mandallena1,Hafsa.Mandallena2,LeDretRaoult}. In this framework, we assume that $W$ is continuous and that for all $F\in \R^{3\times 3}$ there hold 
	\begin{subequations}\label{hyp:membrane}
		\begin{align}
			&\bullet\, W(F)=+\infty\quad\iff\quad \det F\le 0;\label{hyp:orientation}\\
			&\bullet\, W(F)\ge c_1|F|^p-c_2,\text{ for some $p>1$ and $c_1,c_2>0$;}\label{hyp:Wgrowth}\\			
			&\bullet\, \text{for all $\delta>0$ there exists $C_\delta>0$ such that $W(F)\le C_\delta(1+|F|^p)$ if $\det F\ge \delta$}.\label{hyp:pbounddelta}
		\end{align}
	\end{subequations}
	\begin{rmk}
		Hypothesis \eqref{hyp:pbounddelta} has been introduced in \cite{Hafsa.Mandallena2} as a weak growth condition, compatible with the orientation preserving condition \eqref{hyp:orientation}, for which the dimension reduction is still feasible. It improves the uniform growth condition required in the pioneering work \cite{LeDretRaoult}, which is actually unphysical. For this reason, in the current paper we prefer to stick to assumptions \eqref{hyp:membrane} for the membrane regime, although the presence of pressure could also be handled in the framework of \cite{LeDretRaoult}, with small adjustments.
	\end{rmk}

	In the regimes $\alpha>0$ (when \eqref{eq:minSO3} is in force), in order to catch the correct rescaling of minimizing sequences it will also be useful to introduce the functionals $\overline{\mc E}^{\alpha,\pi}_h(y):=\mc E^{\alpha,\pi}_h(y)-\mc E^{\alpha,\pi}_h({id}_h)$, namely
	\begin{equation}\label{eq:energybar}
		\overline{\mc E}^{\alpha,\pi}_h(y)= \begin{cases}\displaystyle
			\int_{\Omega}W(\nabla_h y(x)) \d x +h^\alpha\pi\int_\Omega\Big(\det\nabla_h y(x)-1\Big)\d x,& \text{if }{ y\in \dom \mc E^{\alpha,\pi}_h},\\
			+\infty, &\text{otherwise.}
		\end{cases}
	\end{equation}
	This operation clearly does not affect minimizers (nor almost-minimizers) of $\mc E^{\alpha,\pi}_h$.
	
	\subsection{Known pressureless case}
	Before presenting our results, we review the known framework with no pressure ($\pi=0$). For lightness of exposition, we will write $\mc E^{\alpha}_h$ in place of $\mc E^{\alpha,0}_h$. 
	
	It will also be convenient to introduce the following notations, which we will adopt throughout the whole paper. For $x\in\Omega$, we write $x=(x',x_3)\in S\times\left(-\frac 12,\frac 12\right)$; similarly, for $\R^3$-valued functions $y$ we denote by $y'=(y_1,y_2)$ their first two components. Moreover, the symbols $\nabla'$ and $(\nabla')^2$ stand for the gradient and the Hessian with respect to $x'$, i.e. $\nabla' y=(\partial_1 y,\partial_2 y)$ and $(\nabla')^2y=(\nabla'\partial_1 y,\nabla'\partial_2 y)$. The symmetric gradient with respect to $x'$ of a $\R^2$-valued function $u$ will be instead denoted by $e'(u)$.
	
	We introduce the quadratic forms $Q_3$ and $Q_2$, defined respectively on $\R^{3\times 3}$ and $\R^{2\times 2}$ as
	\begin{equation}\label{def:Q2}
		Q_3(F):=D^2W(I)[F]^2,\qquad Q_2(G):=\min\limits_{a\in \R^3}Q_3(G+a\otimes e_3),
	\end{equation}
	where, given a matrix $G\in \R^{2\times 2}$, with a slight abuse of notation we still write $G$ for the ${3\times 3}$ matrix $\begin{pmatrix}
		\begin{array}{c|c}
			G & 0 \\
			\hline 0 & 0
		\end{array}
	\end{pmatrix}$. Conversely, given $G\in \R^{3\times 3}$, we denote by $G_{2\times 2}$ the $2\times 2$ matrix obtained by removing the third column and row of $G$.
	
	We first recall the following compactness result, which we will exploit also when a  pressure term  is present.
	
	\begin{prop}[Compactness \cite{friesecke.james.mueller1,friesecke.james.mueller2}]\label{prop:compactness}
		Assume that $W$ satisfies \eqref{hyp:bending} and let $y_h$ be such that ${\mc E}^\alpha_h(y_h)\le C h^{2\alpha}$, for some $\alpha\ge 1$. Then the following facts hold:
		\begin{itemize}
			\item If $\alpha=1$, up to a nonrelabelled subsequence, one has
			\begin{equation}\label{eq:gradconv}
				\nabla_h y_h \xrightarrow[h\to 0]{}(\nabla' y|b)\in W^{1,2}(S;\R^{3\times 3}),\qquad\text{strongly in }L^2(\Omega;\R^{3\times 3}),
			\end{equation}
			where the normal vector $b$ is defined as
			\begin{equation}\label{eq:normalvector}
				b(x'):= \partial_1 y(x')\wedge \partial_2 y(x').
			\end{equation}
			In particular, $(\nabla' y(x')|b(x'))$ belongs to $SO(3)$ for almost every $x'\in S$.
			
			Moreover, there exist a subset $S_h\subseteq S$ and rotations $R_h\colon S_h\to SO(3)$ such that, after setting
			\begin{equation}\label{eq:Gh}
				G_h(x):=\begin{cases}\displaystyle
					\frac{R_h(x')^T\nabla_h y_h(x)-I}{h},&\text{if } x\in S_h\times \left(-\frac 12,\frac 12\right),\\
					0,&\text{otherwise},
				\end{cases}
			\end{equation}
			one has (up to subsequences)
			\begin{equation}\label{eq:Ghconv}
				G_h\xrightharpoonup[h\to 0]{} G,\qquad\text{weakly in }L^2(\Omega;\R^{3\times3}),
			\end{equation}
			and 
			\begin{equation}\label{eq:setvanish}
				\lim\limits_{h\to 0}\left|\Omega\setminus \left(S_h\times\left(-\frac 12,\frac 12\right)\right)\right|=0.
			\end{equation}
			Finally, one can write
			\begin{equation}\label{eq:Gdecomp}
				G_{2\times 2}(x)= G_0(x')+x_3II(x'),
			\end{equation}
			where $G_0\in L^2(S;\R^{2\times 2})$ and $II(x')$ denotes the second fundamental form
			\begin{equation}\label{eq:IIform}
				II(x'):=\nabla' y(x')^T\nabla' b(x').
			\end{equation}.
			
			\item If $\alpha >1$, there exist rotations $R_h\colon S\to SO(3)$, constant rotations $\overline R _h\in SO(3)$ and constant vectors $c_h\in \R^3$ such that, after defining
			\begin{subequations}\label{eq:defuv}
				\begin{align}
					&\widetilde{y}_h(x):=\overline R_h^T y_h(x)-c_h, \qquad\qquad\qquad G_h(x):=\frac{R_h(x')^T\nabla_h \widetilde{y}_h(x)-I}{h^\alpha},\label{eq:Gh2}\\
					& u_h(x'):=\left(\frac{1}{h^{2(\alpha-1)}}\wedge\frac{1}{h^\alpha}\right)\int_{-\frac 12}^{\frac 12}(\widetilde{y}_h'(x',x_3)-x')\d x_3, \\
					& v_h(x'):=\frac{1}{h^{\alpha-1}}\int_{-\frac 12}^{\frac 12}(\widetilde{y}_h)_3(x',x_3)\d x_3,
				\end{align}
			\end{subequations}
			up to a nonrelabelled subsequence one has
			\begin{subequations}\label{eq:convuv}
				\begin{align}
					&\nabla_h\widetilde y_h\xrightarrow[h\to 0]{} I,&&\text{strongly in }L^{2}(\Omega;\R^{3\times 3}),\label{eq:convuva}\\
					&u_h\xrightharpoonup[h\to 0]{} u,&&\text{weakly in }W^{1,2}(S;\R^2), \label{eq:convuvb}\\
					&v_h\xrightarrow[h\to 0]{} v\in W^{2,2}(S),&&\text{strongly in }W^{1,2}(S),\label{eq:convuvc}\\
					&G_h\xrightharpoonup[h\to 0]{} G,&&\text{weakly in }L^{2}(\Omega;\R^{3\times 3}).\label{eq:Ghconv2}
				\end{align}
			\end{subequations}
			Moreover, one can write
			\begin{equation}\label{eq:Gv}
				G_{2\times 2}(x)= G_0(x')-x_3(\nabla')^2 v(x'),
			\end{equation}
			and the following equalities hold:
			\begin{subequations}
				\begin{align}
					& \left|e'(u(x'))+\frac 12 \nabla' v(x')\otimes \nabla' v(x')\right|=\det (\nabla')^2 v(x')=0, &&\text{if }\alpha\in(1,2),\label{eq:linconstr}\\
					& \sym G_0(x')=e'(u(x'))+\frac 12 \nabla' v(x')\otimes \nabla' v(x'), &&\text{if }\alpha=2,\label{eq:G0vK}\\
					& \sym G_0(x')=e'(u(x')), &&\text{if }\alpha>2. \label{eq:G0vKlin}
				\end{align}
			\end{subequations}
		\end{itemize}
	\end{prop}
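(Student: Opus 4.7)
The plan is to reduce everything to the geometric rigidity theorem of Friesecke--James--M\"uller, combined with the coercivity assumption \eqref{hyp:Wdist}. Since $\det\nabla_h y_h>0$ almost everywhere and $W(\nabla_h y_h)\ge c\,\dist(\nabla_h y_h; SO(3))^2$, the energy bound $\mathcal{E}^\alpha_h(y_h)\le Ch^{2\alpha}$ immediately gives
\begin{equation*}
\int_\Omega \dist(\nabla_h y_h(x); SO(3))^2\d x\le Ch^{2\alpha}.
\end{equation*}

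For the case $\alpha=1$, the strategy is to apply geometric rigidity locally on small squares of sidelength comparable to $h$ covering $S$, so as to obtain piecewise-constant rotation fields $\widetilde R_h\colon S_h\to SO(3)$ with the estimate $\|\nabla_h y_h-\widetilde R_h\|_{L^2}\le Ch$ and a uniform $W^{1,2}$-type control $\|\nabla \widetilde R_h\|_{L^2}\le C$ obtained by comparing rotations on neighbouring cells. Mollifying $\widetilde R_h$ produces the smooth $R_h$ in \eqref{eq:Gh} and, by weak $L^2$-compactness, a limit $R\in W^{1,2}(S;SO(3))$; one then identifies $R=(\nabla'y\,|\,b)$ with $b=\partial_1y\wedge\partial_2y$ by passing to the limit in $\nabla_h y_h$. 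The quantity $G_h$ is bounded in $L^2$ by construction and hence admits a weak limit $G$, while \eqref{eq:setvanish} comes from the cells discarded where the local rigidity fails. The Kirchhoff-Love decomposition \eqref{eq:Gdecomp} is obtained by integrating in $x_3$: writing $R_h^T\nabla_h y_h=I+hG_h$ and differentiating in $x_3$ to identify the $x_3$-affine dependence of $(G_h)_{2\times 2}$ in the limit, through the identity $\partial_3(R^T\nabla'y)=R^T\nabla'b$, which yields $II(x')=\nabla'y^T\nabla'b$.

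For $\alpha>1$, the same rigidity argument can be applied \emph{globally}, because the energy is much smaller than $h^2$. This produces a single rotation field $R_h\colon S\to SO(3)$ with $\|\nabla_h y_h-R_h\|_{L^2}\le Ch^\alpha$ and $\|\nabla R_h\|_{L^2}\le Ch^{\alpha-1}$; in particular $R_h$ converges strongly to some constant rotation $\overline R_h$ (which we may absorb as in \eqref{eq:defuv}). After this normalization, $\nabla_h\widetilde y_h\to I$ strongly by construction, proving \eqref{eq:convuva}, and $G_h$ defined in \eqref{eq:Gh2} is bounded in $L^2$. The averaged displacements $u_h$, $v_h$ are controlled by Poincar\'e-Wirtinger inequality combined with the $L^2$-bounds on $G_h$ and on $\nabla R_h/h^{\alpha-1}$, yielding \eqref{eq:convuvb}-\eqref{eq:convuvc}; the strong convergence of $v_h$ in $W^{1,2}$ and the $W^{2,2}$-regularity of $v$ come from the fact that $\nabla'v_h$ is related to the skew part of $R_h-I$ rescaled by $h^{\alpha-1}$. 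The identification \eqref{eq:Gv} of the $x_3$-dependence of $G_{2\times 2}$ follows, as in the case $\alpha=1$, from differentiating the relation defining $G_h$ in $x_3$ and passing to the limit.

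The main obstacle, and the most delicate step, is the identification of the algebraic constraints \eqref{eq:linconstr}-\eqref{eq:G0vKlin} on $\operatorname{sym} G_0$. The idea is to expand
\begin{equation*}
R_h^T\nabla_h\widetilde y_h=I+h^\alpha G_h,
\end{equation*}
take symmetric parts, and isolate the $2\times 2$ blocks. In terms of the rescaled displacements, the symmetric part at order $h^{\alpha}$ produces $e'(u)$, while the quadratic correction $\tfrac12\nabla'v\otimes\nabla'v$ arises at order $h^{2(\alpha-1)}$ from the rotation $R_h$ via the Taylor expansion $R_h=\exp(h^{\alpha-1}A_h)$, with $A_h$ skew-symmetric encoding the gradient of $v_h$. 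Balancing the scalings $h^\alpha$ and $h^{2(\alpha-1)}$ then distinguishes the three regimes: when $\alpha=2$ both orders coincide, giving \eqref{eq:G0vK}; when $\alpha>2$ only the linear part survives, giving \eqref{eq:G0vKlin}; when $\alpha\in(1,2)$ the energy bound forces the lower-order terms to vanish, producing the compatibility constraints \eqref{eq:linconstr}, which in the case of $\det(\nabla')^2v$ follows from the classical observation that the vanishing of the linearized Gauss curvature is equivalent to the determinant of the Hessian being zero.
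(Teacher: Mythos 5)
This proposition is quoted by the paper from Friesecke--James--M\"uller without proof, and your sketch follows essentially the same route as those original arguments that the paper relies on: for $\alpha=1$, geometric rigidity applied on cells of size comparable to $h$ with piecewise-constant rotations and difference-quotient estimates (note that the excluded set in \eqref{eq:setvanish} is a boundary layer of cells not fitting inside $S$, rather than cells ``where rigidity fails''), and for $\alpha>1$ the refined global rigidity estimates $\|\nabla_h y_h-R_h\|_{L^2}\le Ch^\alpha$, $\|\nabla' R_h\|_{L^2}\le Ch^{\alpha-1}$ together with the expansion of the rotations, whose balance of the orders $h^\alpha$ and $h^{2(\alpha-1)}$ produces \eqref{eq:linconstr}--\eqref{eq:G0vKlin}. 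As an outline it is correct, with the genuinely hard ingredients (the rigidity theorem itself and the detailed identification of $G$ and of the limit constraints) deferred to the cited works, exactly as the paper does.
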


	Since under the solely assumptions \eqref{hyp:bending} (or \eqref{hyp:membrane}) the energy $\mc E^{\alpha}_h$ may have no minimizers, it is useful to consider the following notion of almost-minimizers. We say that $y_h$ is an $\alpha$-minimizer of $\mc E^\alpha_h$ if
	\begin{equation*}
		\lim\limits_{h\to 0}\frac{1}{h^{2\alpha}}\Big(\mc E^\alpha_h(y_h)-\inf \mc E^\alpha_h\Big)=0.
	\end{equation*}
	The asymptotics of $\mc E^\alpha_h$ can be then stated as follows.
	
	\begin{prop}[]\label{prop:known}
		{\rm\bf [Membrane theory ($\boldsymbol{\alpha=0}$) \cite{belgacem,Hafsa.Mandallena1,Hafsa.Mandallena2,LeDretRaoult}]} Assume that $W$ is continuous and  satisfies \eqref{hyp:membrane}. Let $y_h$ be a $0$-minimizer of $\mc E^{0}_h$. Then there exist constant vectors $c_h\in \R^3$ such that, up to (non relabelled) subsequences, $y_h-c_h$ converges as $h\to 0$ in the weak topology of $W^{1,p}(\Omega;\R^3)$ to a minimizer of the membrane energy $\mc E_{\rm mem}\colon L^p(\Omega;\R^3)\to (-\infty,+\infty]$, defined as
		\begin{equation}\label{eq:membraneenergy}
			\mc E_{\rm mem}(y):=\begin{cases}
				\displaystyle\int_S\mc QW_0(\nabla' y(x'))\d x', &\text{ if } y\in W^{1,p}(\Omega;\R^3) \text{ and }\partial_3y=0,\\
				+\infty, & \text{ otherwise.}
			\end{cases}
		\end{equation}
		Moreover, $\partial_3 y_h$ vanishes as $h\to 0$ in the strong topology of $L^p(\Omega;\R^3)$.
		
		In \eqref{eq:membraneenergy}, the function $W_0: \R^{3\times 2}\to (-\infty,+\infty]$ is defined as 
		\begin{equation*}
			W_0(\xi):=\inf\limits_{a\in\R^3} W(\xi|a),
		\end{equation*}
		while $\mc QW_0$ denotes the quasiconvex envelope \cite{Dacorogna} of $W_0$.

		{\rm\bf [Nonlinear bending theory ($\boldsymbol{\alpha=1}$) \cite{friesecke.james.mueller1}]} Assume that $W$ satisfies \eqref{hyp:bending} and let $y_h$ be a $1$-minimizer of $\mc E^{1}_h$. Then  there exist constant vectors $c_h\in \R^3$ such that, up to (non relabelled) subsequences, $y_h-c_h$ converges as $h\to 0$ in the strong topology of $W^{1,2}(\Omega;\R^3)$ to a minimizer of the bending energy $\mc E_{\rm ben}\colon W^{1,2}(\Omega;\R^3)\to [0,+\infty]$, defined as
		\begin{equation}\label{eq:bendingenergy}
			\mc E_{\rm ben}(y):=\begin{cases}
				\displaystyle\frac{1}{24}\int_SQ_2(II(x'))\d x', &\text{ if }\partial_3y=0  \text{ and }y\in W^{2,2}_{\rm iso}(S;\R^3),\\
				+\infty, & \text{ otherwise,}
			\end{cases}
		\end{equation}
		where the second fundamental form $II(x')$ has been introduced in \eqref{eq:IIform}. Moreover, \eqref{eq:gradconv}, \eqref{eq:Ghconv} and \eqref{eq:Gdecomp} hold true.
		
		In \eqref{eq:bendingenergy}, the set of Sobolev isometries is defined as
		\begin{equation*}
			W^{2,2}_{\rm iso}(S;\R^3):=\{y\in W^{2,2}(S;\R^3):\, \nabla' y(x')^T \nabla' y(x')=I \}.
		\end{equation*}

		{\rm\bf [Linearized bending theory ($\boldsymbol{\alpha\in(1,2)}$) \cite{friesecke.james.mueller2}]}
		Assume that $W$ satisfies \eqref{hyp:bending} and let $S$ be simply connected if $\alpha\in (1,3/2)$.  Let $y_h$ be an $\alpha$-minimizer of $\mc E^{\alpha}_h$. Then there exist rotations $R_h\colon S\to SO(3)$, constant rotations $\overline R _h\in SO(3)$ and constant vectors $c_h\in \R^3$ such that the quantities defined in \eqref{eq:defuv} satisfy, up to (non relabelled) subsequences, the convergences \eqref{eq:convuv} and \eqref{eq:Gv}, and the limit pair $(u,v)$ minimizes the functional $\mc E_{\rm ben, lin}\colon W^{1,2}(S;\R^2)\times W^{2,2}(S)\to [0,+\infty]$, defined as
		\begin{equation}\label{eq:linbendingenergy}
			\mc E_{\rm ben, lin}(u,v):=\begin{cases}
				\displaystyle\frac{1}{24}\int_SQ_2((\nabla')^2v(x'))\d x', &\text{ if \eqref{eq:linconstr} holds},\\
				+\infty, & \text{ otherwise}.
			\end{cases}
		\end{equation}
		
		{\rm\bf [Von K\'arm\'an theory ($\boldsymbol{\alpha=2}$) \cite{friesecke.james.mueller2}]}
		Assume that $W$ satisfies \eqref{hyp:bending} and let $y_h$ be a $2$-minimizer of $\mc E^{2}_h$. Then there exist rotations $R_h\colon S\to SO(3)$, constant rotations $\overline R _h\in SO(3)$ and constant vectors $c_h\in \R^3$ such that the quantities defined in \eqref{eq:defuv} satisfy, up to (non relabelled) subsequences, the convergences \eqref{eq:convuv}, \eqref{eq:Gv} and \eqref{eq:G0vK}, and the limit pair $(u,v)$ minimizes the von K\'arm\'an energy $\mc E_{\rm vK}\colon W^{1,2}(S;\R^2)\times W^{2,2}(S)\to [0,+\infty)$, defined as
		\begin{equation}\label{eq:vKenergy}
			\mc E_{\rm vK}(u,v):=\frac 12 \int_S Q_2\left(e'(u(x'))+\frac 12 \nabla' v(x')\otimes \nabla' v(x')\right) \d x'+\frac{1}{24}\int_SQ_2((\nabla')^2v(x'))\d x'.
		\end{equation}
		
		{\rm\bf [Linearized von K\'arm\'an theory ($\boldsymbol{\alpha>2}$) \cite{friesecke.james.mueller2}]}
		Assume that $W$ satisfies \eqref{hyp:bending} and let $y_h$ be an $\alpha$-minimizer of $\mc E^{\alpha}_h$. Then there exist rotations $R_h\colon S\to SO(3)$, constant rotations $\overline R _h\in SO(3)$ and constant vectors $c_h\in \R^3$ such that the quantities defined in \eqref{eq:defuv} satisfy, up to (non relabelled) subsequences, the convergences \eqref{eq:convuv}, \eqref{eq:Gv} and \eqref{eq:G0vKlin}, and the limit pair $(u,v)$ minimizes the functional $\mc E_{\rm vK, lin}\colon W^{1,2}(S;\R^2)\times W^{2,2}(S)\to [0,+\infty)$, defined as
		\begin{equation}\label{eq:vKlinenergy}
			\mc E_{\rm vK, lin}(u,v):=\frac 12 \int_S Q_2\left(e'(u(x'))\right) \d x'+\frac{1}{24}\int_SQ_2((\nabla')^2v(x'))\d x'.
		\end{equation}
	\end{prop}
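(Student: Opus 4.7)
My plan is to prove this by the standard two-step $\Gamma$-convergence programme, treating the membrane case $\alpha=0$ separately from the rigidity-based cases $\alpha\ge 1$, and following in spirit \cite{LeDretRaoult, Hafsa.Mandallena1} for the former and \cite{friesecke.james.mueller1, friesecke.james.mueller2} for the latter. In every regime I would establish (i) a $\liminf$ inequality and (ii) a recovery sequence, after which the convergence of almost-minimizers stated in the proposition follows from the fundamental theorem of $\Gamma$-convergence combined with the compactness statement of Proposition \ref{prop:compactness}, which already yields the rotations $R_h$, the rescaled strains $G_h$ and the structural decompositions \eqref{eq:Gdecomp}, \eqref{eq:Gv}, \eqref{eq:G0vK}, \eqref{eq:G0vKlin}.

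For $\alpha\ge 1$, the $\liminf$ proceeds by using frame indifference to write $W(\nabla_h y_h) = W(R_h^T\nabla_h y_h) = W(I+h^\alpha G_h)$ on $S_h\times (-1/2,1/2)$, and Taylor-expanding $W$ around $I$ to get
\begin{equation*}
\frac{1}{h^{2\alpha}}W(I+h^\alpha G_h) \;=\; \tfrac{1}{2}Q_3(G_h) + o(1),
\end{equation*}
where the remainder is controlled via dominated convergence and the $L^2$-bound on $G_h$, while the contribution from $\Omega\setminus(S_h\times(-1/2,1/2))$ is negligible thanks to \eqref{eq:setvanish} and the coercivity \eqref{hyp:Wdist}. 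Weak $L^2$-lower semicontinuity of $\int Q_3$, followed by pointwise minimization over the third column (which turns $Q_3$ into $Q_2$ by definition \eqref{def:Q2}) and by Fubini in $x_3$ applied to the decomposition $G_{2\times 2}(x)=G_0(x')+x_3 II(x')$ or $G_0(x')-x_3(\nabla')^2 v(x')$, then produces the $\tfrac{1}{24}Q_2$ bending term and, in the von Kármán-type cases, the membrane term in $\sym G_0$. For the upper bound in the von Kármán-type regimes, I would pick smooth $(u,v)$ and take as recovery sequence
\begin{equation*}
y_h(x) \;=\; \bigl(x'+h^{2(\alpha-1)} u(x'),\ h^{\alpha-1} v(x')\bigr) \;-\; h^\alpha x_3\,(\nabla' v(x'),0) \;+\; h^\alpha x_3\, d(x',x_3),
\end{equation*}
where $d$ realises the minimum in \eqref{def:Q2}; a Taylor expansion then matches $\mc E_h^\alpha(y_h)/h^{2\alpha}$ to the target limit, and a standard density argument extends from smooth to general $(u,v)$. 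For $\alpha=1$ the construction is analogous but starts from a smooth isometric immersion plus a bending correction $x_3 b$ along the normal.

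For the membrane regime $\alpha=0$, $W^{1,p}$-compactness is immediate from \eqref{hyp:Wgrowth}, and the $\liminf$ follows from the LeDret--Raoult identification of $\mc QW_0$ as the relaxation of $\int_S W_0(\nabla'y)\d x'$ combined with the standard quasiconvex envelope theorem \cite{Dacorogna}; the assumption \eqref{hyp:pbounddelta} ensures that the recovery sequence, built by adding an optimal third column to a smooth two-dimensional competitor, stays in the finiteness domain of $W$, and the condition $\partial_3 y=0$ is recovered from the obvious bound $\|\partial_3 y_h\|_{L^p}\le h\|\partial_3 y_h/h\|_{L^p}\to 0$.

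The step I expect to be most delicate is the recovery sequence in the linearized bending regime $\alpha\in(1,2)$, where the limit pairs are constrained by the developability condition \eqref{eq:linconstr}: realising such a degenerate second fundamental form as the limit of genuinely three-dimensional deformations is fragile, and for $\alpha\in(1,3/2)$ on non-simply-connected domains an extra argument is needed to globally integrate the associated rotation field, which is precisely why the proposition assumes simple connectedness of $S$ in that range.
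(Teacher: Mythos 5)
You should first note that the paper does not prove Proposition \ref{prop:known} at all: it is a summary of known results, quoted with citations from \cite{LeDretRaoult,belgacem,Hafsa.Mandallena1,Hafsa.Mandallena2} and \cite{friesecke.james.mueller1,friesecke.james.mueller2}, so there is no internal proof to compare against. Your plan retraces the route of those references (compactness from Proposition \ref{prop:compactness}, liminf by Taylor expansion and lower semicontinuity, recovery by an explicit ansatz, then the fundamental theorem of $\Gamma$-convergence), which is the correct outline; but several steps, as written, would not go through.

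Concretely: (a) In the recovery ansatz for the von K\'arm\'an-type regimes you scale the in-plane displacement by $h^{2(\alpha-1)}$. By \eqref{eq:defuv} the correct normalization is $\frac{1}{h^{2(\alpha-1)}}\wedge\frac{1}{h^{\alpha}}$, which equals $\frac1{h^{\alpha}}$ when $\alpha>2$; with your scaling $u_h=h^{\alpha-2}u\to0$, so for $\alpha>2$ your sequence only recovers $u\equiv0$ and the limsup inequality fails for general $u$. The in-plane term must be $h^{\alpha}u$ there (as in the ansatz used in Section \ref{sec:highscalings}). (b) Your corrector $h^{\alpha}x_3\,d$ is off by one power of $h$: since $\nabla_h$ divides the $x_3$-derivative by $h$, this term contributes $O(h^{\alpha-1})$ to the third column, so the rescaled strain $G_h=(R_h^T\nabla_h y_h-I)/h^{\alpha}$ blows up. The correctors must enter at order $h^{\alpha+1}$, and one needs \emph{two} of them, $h^{\alpha+1}x_3\,c(x')+h^{\alpha+1}\tfrac{x_3^2}{2}d(x')$, to realize the optimal third columns of both the stretching and the bending parts (compare \eqref{eq:recovery}); you also dropped the thickness term $hx_3e_3$ from ${id}_h$. (c) The liminf is not a dominated-convergence statement: $G_h$ is only bounded in $L^2$ and the Taylor expansion of $W$ is valid only near $SO(3)$, so one must truncate on sets of the type \eqref{eq:Bh} (where $h^{\alpha/2}|G_h|\le1$) and use weak $L^2$ lower semicontinuity of the quadratic form, as in \cite{friesecke.james.mueller1,friesecke.james.mueller2}. (d) In the membrane regime, under the orientation constraint \eqref{hyp:orientation} with growth from above only on $\{\det F\ge\delta\}$ as in \eqref{hyp:pbounddelta}, the identification of the $\Gamma$-limit as $\int_S\mc QW_0$ is \emph{not} the classical LeDret--Raoult theorem (which needs $p$-growth on all of $\R^{3\times3}$); it is precisely the harder contribution of \cite{belgacem,Hafsa.Mandallena1,Hafsa.Mandallena2}, and your remark that \eqref{hyp:pbounddelta} merely keeps a recovery sequence in the finiteness domain does not address that difficulty. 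Finally, for $\alpha=1$ your recovery starts from a smooth isometry, but density of smooth isometries in $W^{2,2}_{\rm iso}(S;\R^3)$ is not available on general Lipschitz domains (this is exactly why the paper invokes \eqref{eq:regS} and Theorem \ref{thm:density} in the pressure case); the pressureless argument of \cite{friesecke.james.mueller1} is designed to avoid it.
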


	\subsection{Pressure effects}
	
	We can now state our results regarding the asymptotic behaviour  as $h\to 0$ of $\overline{\mc E}_h^{\alpha,\pi}$ (of ${\mc E}_h^{\alpha,\pi}$ if $\alpha=0$), namely when the action of the pressure term is taken into account. We will see that for high rescalings, namely in the von K\'arm\'an-type regimes $\alpha\ge 2$, pressure loads interact with the elastic energy density giving rise to nontrivial effects, while for middle rescalings, namely in the bending regimes $\alpha\in[1,2)$, the presence of pressure plays no role in terms of minimizers. In the lowest rescaling $\alpha=0$, corresponding to the membrane regime, the espression \eqref{eq:membranepien} below suggests that pressure effects may still be persistent in the limit. Although we strongly believe this is not case (namely, pressure effects survive in the limit just for high rescaling) motivated by the previous considerations, we are not able to prove that minimizers of \eqref{eq:membranepien} do not depend on $\pi$. We postpone a discussion on this topic to Section~\ref{sec:membrane} where in particular we propose conjecture \eqref{conjecture}, and we present a supporting example.
	
	It will be useful to introduce the following function, defined on $2\times 2$ matrices $G$:
	\begin{equation}\label{def:Qpi}
		Q_2^\pi(G):=\min\limits_{a\in \R^3}\{Q_3(G+a\otimes e_3)+2\pi a_3\}.
	\end{equation}
	Since $Q_3$ is a quadratic form, one can easily show that
	\begin{equation}\label{eq:Qrewriting}
		Q_2^\pi(G)= Q_2(G)+\pi\mc L G+\pi^2\kappa,
	\end{equation}
	for some linear operator $\mc L\in Lin(\R^{2\times 2}_{\sym};\R)$ and some constant $\kappa\in \R$ (which both depend on $W$), where $Q_2$ has been defined in \eqref{def:Q2}.
	
	The next theorem states our main result.
	\begin{thm}\label{thm:main}
		{\rm\bf [Membrane theory ($\boldsymbol{\alpha=0}$)]} Assume that $W$ is continuous and it satisfies \eqref{hyp:membrane} with $p\ge 3$, and if $p=3$ assume $\pi>-c_1$, where $c_1$ is the constant appearing in \eqref{hyp:Wgrowth}. Then $|\inf \mc E^{0,\pi}_h|\le C$. Let now $y_h$ be a $0$-minimizer of $\mc E^{0,\pi}_h$. Then there exist constant vectors $c_h\in \R^3$ such that, up to (non relabelled) subsequences, $y_h-c_h$ converges as $h\to 0$ in the weak topology of $W^{1,p}(\Omega;\R^3)$ to a minimizer of the functional $\mc E^\pi_{\rm mem}\colon L^p(\Omega;\R^3)\to (-\infty,+\infty]$, defined as
		\begin{equation}\label{eq:membranepien}
			\mc E^\pi_{\rm mem}(y):=\begin{cases}
				\displaystyle\int_S\mc Q(W^\pi)_0(\nabla' y(x'))\d x', &\text{ if } y\in W^{1,p}(\Omega;\R^3) \text{ and }\partial_3y=0\\
				+\infty, & \text{ otherwise,}
			\end{cases}
		\end{equation}
		where $W^\pi(F):=W(F)+\pi\det F$. Moreover, $\partial_3 y_h$ vanishes as $h\to 0$ in the strong topology of $L^p(\Omega;\R^3)$.
		
		{\rm\bf [Nonlinear bending theory ($\boldsymbol{\alpha=1}$)]} 	Assume that $W$ satisfies \eqref{hyp:bending} together with either \eqref{eq:orientationpreserving} or \eqref{hyp:pineg}, and assume that the planar set $S$ satisfies:
		\begin{equation}\label{eq:regS}
			\begin{gathered}
				\text{there exists a closed subset $N\subseteq \partial S$ with null $\mc H^1$-measure with the property that}\\
				\text{the outer unit normal to $S$ exists and it is continuous on $\partial S\setminus N$.}
			\end{gathered}
		\end{equation}
		Then $-C h^{2}\le \inf \overline{\mc E}^{1,\pi}_h\le 0$. Let now $y_h$ be a $1$-minimizer of $\overline{\mc E}^{1,\pi}_h$. Then  there exist constant vectors $c_h\in \R^3$ such that, up to (non relabelled) subsequences, $y_h-c_h$ converges as $h\to 0$ in the strong topology of $W^{1,2}(\Omega;\R^3)$ to a minimizer of the bending energy $\mc E_{\rm ben}$, defined in \eqref{eq:bendingenergy}. 	Moreover, \eqref{eq:gradconv}, \eqref{eq:Ghconv} and \eqref{eq:Gdecomp} hold true.

		{\rm\bf [Linearized bending theory ($\boldsymbol{\alpha\in(1,2)}$)]}
		Assume that $W$ satisfies \eqref{hyp:bending} together with either \eqref{eq:orientationpreserving} or \eqref{hyp:pineg}. Let $S$ be simply connected if $\alpha\in (1,3/2)$. Then $-C h^{2\alpha}\le \inf \overline{\mc E}^{\alpha,\pi}_h\le 0$. Let now $y_h$ be an $\alpha$-minimizer of $\overline{\mc E}^{\alpha,\pi}_h$. Then there exist rotations $R_h\colon S\to SO(3)$, constant rotations $\overline R _h\in SO(3)$ and constant vectors $c_h\in \R^3$ such that the quantities defined in \eqref{eq:defuv} satisfy, up to (non relabelled) subsequences, the convergences \eqref{eq:convuv} and \eqref{eq:Gv}, and the limit pair $(u,v)$ minimizes the functional $\mc E_{\rm ben, lin}$ defined in \eqref{eq:linbendingenergy}.

		{\rm\bf [Von K\'arm\'an theory ($\boldsymbol{\alpha=2}$)]}
		Assume that $W$ satisfies \eqref{hyp:bending} together with either \eqref{eq:orientationpreserving} or \eqref{hyp:pineg}. Then $-C h^{4}\le \inf \overline{\mc E}^{2,\pi}_h\le 0$. Let now $y_h$ be a $2$-minimizer of $\overline{\mc E}^{2,\pi}_h$. Then there exist rotations $R_h\colon S\to SO(3)$, constant rotations $\overline R _h\in SO(3)$ and constant vectors $c_h\in \R^3$ such that the quantities defined in \eqref{eq:defuv} satisfy, up to (non relabelled) subsequences, the convergences \eqref{eq:convuv}, \eqref{eq:Gv}, and \eqref{eq:G0vK}, and the limit pair $(u,v)$ minimizes the functional $\mc E^\pi_{\rm vK}\colon W^{1,2}(S;\R^2)\times W^{2,2}(S)\to \R$, defined as
		\begin{equation}\label{eq:vKenergypi}
			\mc E^\pi_{\rm vK}(u,v):= \mc E_{\rm vK}(u,v)+\pi\!\int_S\!\left(\frac 12 \mc L\left(e'(u(x')){+}\frac 12 \nabla' v(x')\otimes \nabla' v(x')\right){+}\div' u(x'){+}\frac 12|\nabla' v(x')|^2\right)\!\! \d x',
		\end{equation}
		where $\mc E_{\rm vK}$ and $\mc L$ have been introduced in \eqref{eq:vKenergy} and \eqref{eq:Qrewriting}, respectively.

		{\rm\bf [Linearized von K\'arm\'an theory ($\boldsymbol{\alpha>2}$) }
		Assume that $W$ satisfies \eqref{hyp:bending} together with either \eqref{eq:orientationpreserving} or \eqref{hyp:pineg}. Then $-C h^{2\alpha}\le \inf \overline{\mc E}^{\alpha,\pi}_h\le 0$. Let now $y_h$ be an $\alpha$-minimizer of $\overline{\mc E}^{\alpha,\pi}_h$. Then there exist rotations $R_h\colon S\to SO(3)$, constant rotations $\overline R _h\in SO(3)$ and constant vectors $c_h\in \R^3$ such that the quantities defined in \eqref{eq:defuv} satisfy, up to (non relabelled) subsequences, the convergences \eqref{eq:convuv}, \eqref{eq:Gv} and \eqref{eq:G0vKlin}, and the limit pair $(u,v)$ minimizes the functional $\mc E^\pi_{\rm vK, lin}\colon W^{1,2}(S;\R^2)\times W^{2,2}(S)\to \R$, defined as
		\begin{equation}\label{eq:vKlinenergypi}
			\mc E^\pi_{\rm vK, lin}(u,v):=\mc E_{\rm vK, lin}(u,v)+\pi\int_S\left(\frac 12 \mc L(e'(u(x'))+\div' u(x')\right) \d x',
		\end{equation}
		where $\mc E_{\rm vK, lin}$ and $\mc L$ have been introduced in \eqref{eq:vKlinenergy} and \eqref{eq:Qrewriting}, respectively.
	\end{thm}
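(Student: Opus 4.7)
The proof of each regime follows the standard $\Gamma$-convergence template, namely: (i) an a priori energy bound and compactness, (ii) the $\Gamma$-liminf inequality, (iii) a recovery sequence, and (iv) the convergence of $\alpha$-minimizers. The membrane case ($\alpha=0$) is the most direct: since $|\det F|\le |F|^3$ and $p\ge 3$, the integrand $W^\pi(F)=W(F)+\pi\det F$ still satisfies \eqref{hyp:membrane} with a positive coercivity constant $c_1+(\pi\wedge 0)>0$ when $\pi>-c_1$. Then the Le Dret--Raoult/Hafsa--Mandallena theorem applied to $W^\pi$ yields directly \eqref{eq:membranepien} as its $\Gamma$-limit, and the compactness is inherited from the known pressureless setting thanks to the same coercivity.

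For the higher regimes $\alpha\ge 1$, the uniform computational core is the following. Writing $\nabla_h y_h=R_h(I+h^\alpha G_h)$ as in Proposition~\ref{prop:compactness} and applying the determinant expansion \eqref{eq:sviluppodet}, one checks
\begin{equation*}
\frac{1}{h^{2\alpha}}\,h^\alpha\pi\int_\Omega(\det\nabla_h y_h-1)\d x=\pi\int_\Omega\tr G_h\d x+O(h^\alpha),
\end{equation*}
so the weak convergence $G_h\rightharpoonup G$ in $L^2$ sends the rescaled pressure contribution to $\pi\int_\Omega\tr G\d x$. To justify the a priori bound $\overline{\mc E}^{\alpha,\pi}_h(y_h)\ge -Ch^{2\alpha}$, I would split $\Omega$ into the region where $|\det\nabla_h y_h-1|$ is small and its complement: on the former \eqref{eq:growthdet} provides a quadratic control, while on the latter (when $\pi<0$) the linear estimate \eqref{hyp:pineg} forces $|\pi(\det-1)|\le |\pi|\,W(\nabla_h y_h)/c$. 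Young's inequality then absorbs the pressure into the elastic part up to a constant times $h^{2\alpha}$. Testing on $\mathrm{id}_h$ gives $\inf\overline{\mc E}^{\alpha,\pi}_h\le 0$, so compactness follows directly from Proposition~\ref{prop:compactness}.

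For the $\Gamma$-liminf inequality, the classical lower bound of \cite{friesecke.james.mueller1,friesecke.james.mueller2} on the elastic part combines with the above limit of the pressure. The key algebraic step is to minimize $\frac12 Q_3(G)+\pi G_{33}$ pointwise over the free third column of $G$, producing $\frac12 Q_2^\pi(G_{2\times 2})$ as in \eqref{def:Qpi}. Substituting the structural decompositions \eqref{eq:Gdecomp} or \eqref{eq:Gv} together with \eqref{eq:Qrewriting}, the limiting energy becomes the sum of the classical pressureless limit plus a linear-in-$\pi$ piece (involving $\tr G_0$ and $\mc L(\sym G_0)$) plus a constant proportional to $\pi^2\kappa|S|$. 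The dichotomy between bending and von K\'arm\'an regimes is revealed here: in the bending regimes $G_0$ (or at least $\sym G_0$) is unconstrained, so the pointwise minimization over $G_0$ absorbs the linear-in-$\pi$ terms into a further $y$-independent constant $c_\pi$, yielding $\mc E_{\rm ben}+c_\pi$ or $\mc E_{\rm ben, lin}+c_\pi$; in the von K\'arm\'an regimes $\sym G_0$ is pinned by \eqref{eq:G0vK} or \eqref{eq:G0vKlin}, so the linear-in-$\pi$ piece survives and recovers precisely \eqref{eq:vKenergypi} or \eqref{eq:vKlinenergypi}.

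The recovery sequences are built by perturbing the Friesecke--James--M\"uller Ans\"atze with an additional quadratic-in-$x_3$ correction whose third-column coefficient is chosen pointwise to realize the $\R^3$-minimum in \eqref{def:Qpi}, together with (in the bending regimes) a constant-in-$x_3$ correction to $G_0$ that realizes the absorbed pressure constant. The main technical obstacle is ensuring that the second-order Taylor expansion of $W$ around $SO(3)$ and the determinant expansion \eqref{eq:sviluppodet} are performed with matching precision, so that the $h^{2\alpha}$-order terms from the elastic energy and the $h^{\alpha}$-order terms from the pressure align coherently to produce exactly the claimed limit densities. Once this is arranged, strong $L^2$-convergence of $G_h$ to the explicit pointwise optimizer delivers the matching upper bound, and convergence of $\alpha$-minimizers to minimizers of the respective $\Gamma$-limit is a standard consequence.
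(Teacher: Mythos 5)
Your overall route coincides with the paper's: the membrane case by verifying that $W^\pi$ satisfies \eqref{hyp:membrane} and invoking the known pressureless theorem, compactness by absorbing the pressure into the elastic energy via \eqref{eq:growthdet} and \eqref{hyp:pineg}, the liminf by adding the classical Friesecke--James--M\"uller lower bound to the limit of the pressure term and then minimizing out the third column to produce $Q_2^\pi$ from \eqref{def:Qpi}, the bending/von K\'arm\'an dichotomy according to whether $\sym G_0$ is free or pinned by \eqref{eq:G0vK}--\eqref{eq:G0vKlin}, and recovery sequences obtained by augmenting the classical Ans\"atze. However, the central analytic step is asserted rather than proved, and as stated it fails: the identity $\frac{1}{h^{2\alpha}}\,h^\alpha\pi\int_\Omega(\det\nabla_h y_h-1)\d x=\pi\int_\Omega\tr G_h\d x+O(h^\alpha)$ does not follow from \eqref{eq:sviluppodet} alone. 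After dividing by $h^\alpha$, the expansion leaves $h^\alpha\int\iota_2(G_h)+h^{2\alpha}\int\det G_h$; the quadratic term is indeed $O(h^\alpha)$ because $G_h$ is bounded in $L^2$, but the cubic term is not controlled: $|\det G_h|\le|G_h|^3$ only lies in $L^{2/3}$, so $\int_\Omega|\det G_h|\d x$ need not even be finite, let alone $O(h^{-\alpha})$. In addition, for $\alpha=1$ the factorization $\nabla_h y_h=R_h(I+hG_h)$ holds only on $S_h\times(-\tfrac12,\tfrac12)$, and the region where $|\det\nabla_h y_h-1|>1$ must be treated separately (by sign if $\pi>0$, via \eqref{hyp:pineg} if $\pi<0$). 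This is precisely where the paper works: it first derives the a priori bound $\|\det\nabla_h y_h-1\|_{L^2(\Omega_h^-)}\le Ch^\alpha$ (Proposition~\ref{prop:bounds}), then truncates on $B_h^\alpha=\{h^{\alpha/2}|G_h|\le1\}$, shows $|\Omega\setminus B_h^\alpha|\to0$ by Chebyshev and $B_h^\alpha\subseteq\Omega_h^-$ as in \eqref{eq:contained}, controls the contributions off $B_h^\alpha$ and off $S_h\times(-\tfrac12,\tfrac12)$ by Cauchy--Schwarz against the $L^2$ bound, and only then passes to the limit in $\int\tr G_h$ using that the exceptional sets have vanishing measure. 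Without this truncation scheme your one-line computation, which is the technical heart of incorporating the live load, is a genuine gap.

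Two further points, more minor. Your a priori bound via Young's inequality together with \eqref{eq:growthdet} and \eqref{hyp:pineg} is fine and equivalent to the paper's argument, as is the membrane case. On the recovery side, note that in the paper's Ans\"atze it is the coefficient of the term linear in $x_3$ (hence constant in $x_3$ at the level of the gradient) whose third column realizes the minimum in \eqref{def:Qpi}, while the quadratic-in-$x_3$ coefficient realizes $Q_2$ for the bending part, so your attribution should be adjusted; more importantly, in the Kirchhoff regime the ``constant-in-$x_3$ correction to $G_0$'' must be implemented as the in-plane term $h\,\nabla'y(x')\overline G x'$ in \eqref{eq:recovery}, which produces exactly $\overline G$ in $G_{2\times2}$ because $\nabla'y^T\nabla'(\nabla'y\,\overline Gx')=\overline G$ for isometries; this construction, combined with the density of smooth isometries (Theorem~\ref{thm:density}), is where the regularity hypothesis \eqref{eq:regS} is actually used, and your sketch never invokes it. Finally, for $\alpha\in(1,2)$ the paper must mollify the optimal field $d$ into $d_h$ and track orders like $O(h^{(\alpha-1)\wedge 2/3})$ because the approximating isometries from \cite{friesecke.james.mueller2} are only $W^{2,\infty}$-bounded; your appeal to ``matching precision'' needs to be made quantitative at that point.
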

	
	The regularity request \eqref{eq:regS} in the regime $\alpha=1$, not needed in the pressureless case, will be used in Theorem~\ref{thm:Gammabending} for the construction of a recovery sequence in the $\Gamma$-convergence analysis, exploiting the following approximation result, whose proof can be found in \cite{Hornung}.
	\begin{thm}\label{thm:density}
		Assume that $S\subseteq \R^2$ is a bounded Lipschitz domain which satisfies \eqref{eq:regS}. Then $W^{2,2}_{\rm iso}(S;\R^3)\cap C^{\infty}(\overline S;\R^3)$ is dense in $W^{2,2}_{\rm iso}(S;\R^3)$ with respect to its strong topology.
	\end{thm}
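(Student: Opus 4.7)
The plan is to exploit the fine structure theorem for $W^{2,2}$ isometries, which asserts that any $y\in W^{2,2}_{\rm iso}(S;\R^3)$ is developable in a measure-theoretic sense: the Gauss curvature vanishes a.e., and through almost every point of $S$ passes a maximal line segment (a ruling), with both endpoints on $\partial S$, along which the normal $b=\partial_1 y\wedge\partial_2 y$ is constant. This codimension-one foliation of the non-flat part of $S$ is what makes smooth approximation possible despite the nonlinear pointwise constraint $\nabla' y^T \nabla' y = I$.

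First I would split $S$ into the relatively closed set $S_{\rm flat}$ where the second fundamental form $II$ of \eqref{eq:IIform} vanishes a.e.\ and its complement $S_{\rm curved}$, and treat each connected component of $S_{\rm curved}$ separately. On such a component, I would introduce a curvilinear parameterization adapted to the rulings: choose a smooth arc $\gamma$ transverse to the ruling foliation and parameterize the component as $(s,t)\mapsto \gamma(s)+t\,\tau(s)$, where $\tau(s)$ is the unit ruling direction along $\gamma$. Because $b$ is constant along each ruling, the entire deformation is encoded by a \emph{one}-dimensional pair: a space curve $c(s):=y(\gamma(s))$ and an orthonormal frame along it. The ruling formula reconstructs $y$ on the curved piece from this 1D data, and the reconstruction is smooth whenever the data is.

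Next I would mollify this 1D data (curve plus frame) in the 1D Sobolev sense, obtaining smooth approximants that automatically produce smooth isometries on each curved component via the ruling formula. On $S_{\rm flat}$, the map $y$ coincides locally with the restriction of a rigid motion, so local smoothness is immediate, and a standard partition-of-unity argument that operates \emph{in the ambient target space} handles local gluing within the flat region without violating the isometric constraint, since affine maps are preserved under such combinations only in trivial ways (so one in fact simply identifies the correct rigid motion piecewise on the connected components of $S_{\rm flat}$).

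The hard part, and the place where the hypothesis \eqref{eq:regS} enters, is the gluing across the interface between $S_{\rm curved}$ and $S_{\rm flat}$, and the behaviour of the rulings as they meet $\partial S$. Convex combinations of isometries are not isometries, so one cannot simply merge the two approximations with a cutoff; instead the matching must be performed at the level of the 1D curve-plus-frame data, choosing the smooth approximants of $c$ and $\tau$ so that they already agree with the prescribed flat-region rigid motion to sufficient order at the interface. The continuity of the outer normal away from an $\mc H^1$-null subset of $\partial S$ guarantees that the rulings do not accumulate pathologically on the boundary, which is what prevents the 1D approximants from losing control in the $W^{2,2}$ norm when reconstructed by the ruling formula. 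Passing from the local approximations to a global one, and verifying the $W^{2,2}$ convergence for the reconstructed maps, is then a careful but essentially technical closing argument.
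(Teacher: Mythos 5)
First, a point of comparison: the paper does not prove this statement at all; it is quoted directly from Hornung \cite{Hornung}, and the proof there indeed follows the developability strategy you outline (flat/ruled decomposition of the domain, reduction to one-dimensional data along a transversal to the rulings, smoothing of that data, and a matching argument in which the boundary hypothesis \eqref{eq:regS} enters). So at the level of strategy you have reconstructed the right approach from the literature.

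As a proof, however, your text has genuine gaps. The central smoothing step fails as stated: mollifying the curve-plus-frame data does not ``automatically'' produce isometries via the ruling formula, because the relevant conditions (unit ruling direction, orthonormality of the frame, and the compatibility relations guaranteeing that the reconstructed map is an exact isometry of the planar domain) are nonlinear pointwise constraints that convolution does not preserve. One must instead parameterize a developable piece by \emph{unconstrained} scalar functions (essentially the angle of the ruling direction and curvature-type quantities along a transversal), smooth those, and reconstruct; this reformulation is what makes the 1D reduction usable and it is missing from your sketch. Moreover, the part you defer as a ``careful but essentially technical closing argument'' is in fact the heart of Hornung's paper: the curved region must be covered by subdomains bounded by rulings (a single smooth transversal arc for a whole component need not exist, and the set where $II$ from \eqref{eq:IIform} vanishes is not a nice relatively closed set one can treat componentwise), the smooth approximants must be matched across rulings without violating the isometric constraint, and the $W^{2,2}$ error must be controlled near $\partial S$, where rulings can become arbitrarily short or nearly tangent to the boundary; this is precisely where \eqref{eq:regS} is used quantitatively, and your appeal to matching ``to sufficient order at the interface'' together with a qualitative non-accumulation remark does not yet constitute an argument. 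In short: correct strategy, but the two decisive steps (constraint-preserving smoothing and the covering/matching near the boundary) are not actually carried out.
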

	
	\subsection{Example of interaction in the von K\'arm\'an regimes}
	
	In the case of isotropic materials, where
	\begin{equation*}
		Q_3(F)=2\mu|\sym F|^2+\lambda (\tr F)^2,
	\end{equation*}
	for some constants $\mu>0$ and $\lambda>-\frac 23 \mu$ called Lam\'e coefficients, one can explicitely compute $Q_2^\pi$ directly from the definition \eqref{def:Qpi}. As a result one obtains the expression
	\begin{equation*}
		Q_2^\pi(G)=\underbrace{2\mu|\sym G|^2+\frac{2\mu\lambda}{2\mu+\lambda} (\tr G)^2}_{=Q_2(G)}-\pi\underbrace{\frac{2\lambda}{2\mu+\lambda}\tr G}_{=-\mc L G}-\pi^2\underbrace{\frac{1}{2\mu+\lambda}}_{=-\kappa}.
	\end{equation*}
	
	With this specific choice, we can thus better characterize \eqref{eq:vKenergypi} and \eqref{eq:vKlinenergypi}. Indeed, by the previous computation we deduce
	\begin{subequations}
		\begin{equation}\label{eq:vKexplicit}
			\mc E^\pi_{\rm vK}(u,v):= \mc E_{\rm vK}(u,v)+\frac{2\mu\pi}{2\mu+\lambda}\int_S\left(\div' u(x')+\frac 12|\nabla' v(x')|^2\right)\d x',
		\end{equation}
		and 
		\begin{equation}\label{eq:vKlinexplicit}
			\mc E^\pi_{\rm vK, lin}(u,v):=\mc E_{\rm vK, lin}(u,v)+\frac{2\mu\pi}{2\mu+\lambda}\int_S\div' u(x') \d x'.
		\end{equation}    
	\end{subequations}
	
	In particular, the Euler-Lagrange equations of \eqref{eq:vKexplicit} (and similarly the ones of \eqref{eq:vKlinexplicit}) present the following two differences with respect to the pressureless case: the equation for $v$ features the additional term $-\frac{2\mu\pi}{2\mu+\lambda}\Delta'v$, while in the equation for $u$ the non-homogeneous Neumann boundary condition $-\frac{2\mu\pi}{2\mu+\lambda}n_{\partial S}$, where $n_{\partial S}$ denotes the outward unit normal to $S$, replaces the homogeneous one.
	
	Thus, on the one hand pressure interacts with the bulk energy perturbing the vertical displacement of the thin material, on the other hand the bulk energy interacts with pressure resulting in a different horizontal pressure-like term acting on its boundary $\partial S$.
	
	\section{The membrane regime}\label{sec:membrane}
	
	We first focus on the proof of Theorem \ref{thm:main} in the case $\alpha=0$. Unlike the higher regimes, in this setting it will be a simple byproduct of Proposition \ref{prop:known}.
	\begin{proof}[Proof of Theorem \ref{thm:main} ($\alpha=0$)]
		Observing that we can write $\mc E^{0,\pi}_h(y)=\int_\Omega W^\pi(\nabla_hy(x)) \d x$ for $y\in \dom \mc E^{0,\pi}_h$, we just need to show that $W^\pi$ satisfies the assumptions \eqref{hyp:membrane} as well, in order to directly apply Proposition \ref{prop:known} ($\alpha=0$). 
		
		Clearly, $W^\pi$ is continuous and \eqref{hyp:orientation} is fulfilled. By recalling that $|\det F|\le |F|^3$, exploiting \eqref{hyp:orientation} we also infer
		\begin{equation*}
			W^\pi(F)\ge c_1|F|^p-c_2+\pi\det F\ge c_1|F|^p-c_2-\pi^-|F|^3,
		\end{equation*}
		where we employed the notation $\pi^-:=-(\pi\wedge 0)$. If $p=3$, the validity of \eqref{hyp:Wgrowth} now directly follows from the assumption $\pi>-c_1$, while for $p>3$ it comes from a straightforward application of the weighted Young's inequality.
		
		Finally, whenever $\det F\ge \delta$ we simply have ($|\det F|\le |F|^3\le 1+|F|^p$ since $p\ge 3$)
		\begin{equation*}
			W^\pi(F)\le C_\delta(1+|F|^p)+|\pi|\det F\le C_\delta\vee|\pi|(1+|F|^p),
		\end{equation*}
		thus also \eqref{hyp:pbounddelta} is fulfilled and we conclude.
	\end{proof}
	
	It would be interesting to understand how the structure of $W^\pi$ affects $(W^\pi)_0$ and more importantly $\mc Q (W^\pi)_0$, namely how they can be written in terms of $W_0$ and $\mc Q W_0$. Although the dependence of $(W^\pi)_0$ on $\pi$ may be quite nasty (see \eqref{eq:Wpi0ex} and \eqref{eq:gipi}), in view of the previous considerations about the higher rescalings we strongly believe and we conjecture that after the quasiconvexification procedure one actually has
	\begin{equation}\label{conjecture}
		\mc Q (W^\pi)_0(\xi)=\mc Q W_0(\xi) +c_\pi,\quad\text{for all }\xi\in \R^{3\times 2},
	\end{equation}
	where $c_\pi\in\R$ is a certain constant. We are not able to show the validity of \eqref{conjecture}, but we now provide an example which suggests, at least formally and at least for positive pressure, that it may be true.

	\subsection{Formal computation of $\mc Q(W^\pi)_0$ in a specific case}\label{sec:examplemembrane}
	We consider the density
	\begin{equation*}
		W(F):=\begin{cases}\displaystyle
			\frac{|F_1|^3+|F_2|^3+|F_3|^3}{3}+\frac{1}{\det F},&\text{if }\det F>0,\\
			+\infty,&\text{otherwise,}
		\end{cases}
	\end{equation*}
	which clearly satisfies assumptions \eqref{hyp:membrane} with $p=3$.
	
	Observing that
	\begin{equation*}
		(W^\pi)_0(\xi)=\inf\limits_{a\in\R^3}(W(\xi|a)+\pi \det(\xi|a))=\inf\limits_{\substack{{a\in\R^3,}\\{(\xi_1\wedge \xi_2)\cdot a >0}}}(W(\xi|a)+\pi (\xi_1\wedge \xi_2)\cdot a),
	\end{equation*}
	in this example we have
	\begin{equation*}
		(W^\pi)_0(\xi)=\frac{|\xi_1|^3+|\xi_2|^3}{3}+ \inf\limits_{\substack{{a\in\R^3,}\\{(\xi_1\wedge \xi_2)\cdot a >0}}}\left(\frac{|a|^3}{3}+\frac{1}{(\xi_1\wedge \xi_2)\cdot a}+\pi (\xi_1\wedge \xi_2)\cdot a\right).
	\end{equation*}
	The minimum point $\bar a$ can be explicitely computed by solving the system
	\begin{equation*}
		\begin{cases}
			\displaystyle |\bar a|\bar a=\left(\frac{1}{((\xi_1\wedge \xi_2)\cdot \bar a)^2}-\pi\right)\xi_1\wedge \xi_2,\\
			(\xi_1\wedge \xi_2)\cdot \bar a>0,
		\end{cases}
		\iff
		\begin{cases}
			\displaystyle\bar a =|\bar a|\frac{\xi_1\wedge \xi_2}{|\xi_1\wedge \xi_2|},\\
			|\xi_1\wedge \xi_2||\bar a|^4+\pi |\xi_1\wedge \xi_2|^2|\bar a|^2-1=0,
		\end{cases}
	\end{equation*}
	which has the unique solution
	\begin{equation*}
		\bar a= \sqrt{\frac{\sqrt{4|\xi_1\wedge \xi_2|+\pi^2|\xi_1\wedge \xi_2|^4}-\pi|\xi_1\wedge \xi_2|^2}{2|\xi_1\wedge \xi_2|}}\frac{\xi_1\wedge \xi_2}{|\xi_1\wedge \xi_2|}.
	\end{equation*}
	By some simple computations we thus obtain
	\begin{equation}\label{eq:Wpi0ex}
		(W^\pi)_0(\xi)=
		\frac{|\xi_1|^3+|\xi_2|^3}{3}+\frac{\sqrt{2}}{3}g_\pi(\xi_1\wedge\xi_2),
	\end{equation}
	where
	\begin{equation}\label{eq:gipi}
		g_\pi(v)=\begin{cases}\displaystyle
			\frac{4}{\sqrt{|v|\sqrt{4|v|+\pi^2|v|^4}-\pi |v|^3}}+\pi \sqrt{|v|\sqrt{4|v|+\pi^2|v|^4}-\pi |v|^3},&\text{if }|v|>0,\\
			+\infty,&\text{otherwise.}
		\end{cases}
	\end{equation}
	
	We now focus on the case of nonnegative pressure $\pi\ge0$, and we limit ourselves to some formal considerations. Since the map $\xi\mapsto \frac{|\xi_1|^3+|\xi_2|^3}{3}$ is convex and in view of \cite[Theorem 6.26]{Dacorogna} we expect that
	\begin{equation}\label{eq:boh}
		\mc Q (W^\pi)_0(\xi)=\frac{|\xi_1|^3+|\xi_2|^3}{3}+\frac{\sqrt{2}}{3}(\mc Cg_\pi)(\xi_1\wedge\xi_2),
	\end{equation}
	where $\mc C$ denotes the convex envelope. At this level, the previous passage is just formal for two reasons: first there is a sum involved in the quasiconvexification procedure, and then $g_\pi$ takes the value $+\infty$ (not allowed in \cite[Theorem 6.26]{Dacorogna}).
	
	Since $g_\pi$ is radial, namely $g_\pi(v)=\rho_\pi(|v|)$ for some even function $\rho_\pi\colon\R\to (-\infty,+\infty]$, it is easy to see that
	\begin{equation*}
		(\mc Cg_\pi)(v)=(\mc C\rho_\pi)(|v|),
	\end{equation*}
	so we just need to compute $\mc C\rho_\pi$. To this aim, we observe that $\rho_\pi(x)=h(j(x))$ for $x\ge 0$, where
	\begin{equation*}
		j(x)=x\sqrt{4x+\pi^2 x^4}-\pi x^3,\qquad\text{and}\qquad h(y)=\begin{cases}
			\frac{4}{\sqrt{y}}+\pi\sqrt{y},&\text{if }y>0,\\
			+\infty,&\text{otherwise}.
		\end{cases}
	\end{equation*}
	If $\pi>0$, it is immediate to check that $j$ is strictly increasing, $j(0)=0$ and $\lim\limits_{x\to +\infty}j(x)=\frac 2\pi$, so that $0\le j(x)<\frac 2\pi$ for all $x\ge 0$. Since $h'(y)=\frac{\pi y-4}{2y\sqrt{y}}$, we thus deduce that $h'(j(x))<0$ for all $x>0$, whence we infer that $\rho_\pi$ is strictly decreasing in $(0,+\infty)$. In particular, we can compute $\lim\limits_{x\to +\infty}\rho_\pi(x)=h(\frac 2\pi)=3\sqrt{2\pi}$. If $\pi=0$ instead, it simply holds $\rho_0(x)=\frac{2\sqrt{2}}{\sqrt{x\sqrt{x}}}$, which is strictly decreasing and vanishes at infinity.
	
	In both situations, previous argument yields $\mc C\rho_\pi(x)\equiv 3\sqrt{2\pi}$, and so by \eqref{eq:boh} we have
	\begin{equation*}
		\mc Q(W^\pi)_0(\xi)=\frac{|\xi_1|^3+|\xi_2|^3}{3}+2\sqrt{\pi}= \mc Q W_0(\xi)+2\sqrt{\pi},
	\end{equation*}
	namely conjecture \eqref{conjecture} holds true in this specific case (up to some formal computations).
	
	\section{$\Gamma$-convergence analysis for higher scalings}\label{sec:highscalings}
	
	The rest of the paper is concerned with the proof of Theorem \ref{thm:main} in case $\alpha\ge 1$. Here and henceforth we thus tacitly assume \eqref{hyp:bending}. The stated results will be a standard consequence of a $\Gamma$-convergence analysis \cite{braides,dalmaso}. We will first show  that sequences with bounded rescaled energy are compact with respect to the desired topologies, and then we will compute the $\Gamma$-limit as $h\to 0$ of $\frac{1}{h^{2\alpha}}\overline{\mc E}_h^{\alpha,\pi}$ in the different regimes, obtaining the stated expressions.
	
	\begin{prop}[Compactness]\label{prop:bounds}
		Let $y_h$ be such that $\overline{\mc E}_h^{\alpha,\pi}(y_h)\le C h^{2\alpha}$. If \eqref{eq:orientationpreserving} is in force, then there holds
		\begin{equation}\label{eq:boundsdet}
			\int_\Omega W(\nabla_h y_h)\d x+\int_{\{|\det\nabla_h y_h-1|\le 1\}}|\det\nabla_h y_h-1|^2 \d x\le C h^{2\alpha}.
		\end{equation}
		In particular, all the compactness results listed in Proposition~\ref{prop:compactness} hold true.
		
		If instead we assume \eqref{hyp:pineg}, in addition to \eqref{eq:boundsdet} there also holds
		\begin{equation}\label{eq:boundsdetbig}
			\int_{\{|\det\nabla_h y_h-1|> 1\}}|\det\nabla_h y_h-1| \d x\le C h^{2\alpha}.
		\end{equation}
		Moreover, in both cases one has
		\begin{equation}\label{eq:infbounds}
			-C h^{2\alpha}\le \inf \overline{\mc E}^{\alpha,\pi}_h\le 0.
		\end{equation}
	\end{prop}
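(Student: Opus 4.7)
The plan is to extract from the bound $\overline{\mc E}_h^{\alpha,\pi}(y_h)\le Ch^{2\alpha}$ a separate control of the elastic contribution $\int_\Omega W(\nabla_h y_h)\d x$ and of a determinant-type remainder, by a careful absorption of the pressure term. I would split the cylinder into the ``close'' region $A_h := \{|\det\nabla_h y_h-1|\le 1\}$ and the ``far'' region $B_h := \{|\det\nabla_h y_h-1|>1\}$, establish on each a pointwise lower bound, and then sum; the technical issue is that on the portion of the domain where $\pi(\det\nabla_h y_h-1)$ is negative, the pressure competes with the elastic energy and one must leave a definite fraction of $\int W$ untouched.

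On $A_h$, the quadratic control \eqref{eq:growthdet} gives $W(\nabla_h y_h)\ge c|\det\nabla_h y_h - 1|^2$; splitting $W=\tfrac12 W+\tfrac12 W$ and employing one half via Young's inequality to absorb the linear term $h^\alpha|\pi||\det-1|$ against $\tfrac{c}{2}|\det-1|^2$ produces a pointwise lower bound of the form $\tfrac12 W + \tfrac{c}{4}|\det-1|^2-Ch^{2\alpha}$. On $B_h$, the orientation-preserving condition forces $\det\nabla_h y_h>2$, hence $\det-1>1$: if $\pi>0$ the pressure contribution is already nonnegative and one simply retains $\int_{B_h}W\d x$, whereas if $\pi<0$ the additional assumption \eqref{hyp:pineg} provides $W\ge c|\det-1|$ on $B_h$, and the same halving-and-absorbing scheme applied to this linear bound handles the negative pressure term, provided $h$ is small enough that $h^\alpha|\pi|\le c/2$. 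Summing over the two regions yields, for every admissible $y_h$,
\begin{equation*}
\overline{\mc E}_h^{\alpha,\pi}(y_h)\ge \tfrac12\int_\Omega W(\nabla_h y_h)\d x+\tfrac{c}{4}\int_{A_h}|\det\nabla_h y_h-1|^2\d x+\tfrac{c}{4}\int_{B_h}|\det\nabla_h y_h-1|\d x-Ch^{2\alpha},
\end{equation*}
from which \eqref{eq:boundsdet} and \eqref{eq:boundsdetbig} follow at once.

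The remaining claims now come almost for free. For \eqref{eq:infbounds}, the upper bound $\inf\le 0$ is obtained by evaluating the functional on the trivial competitor $y(x)=(x_1,x_2,hx_3)$, whose rescaled gradient is the identity and whose determinant equals $1$, while the lower bound $\inf\ge-Ch^{2\alpha}$ is exactly the pointwise estimate above, since the first three integrals are nonnegative. The ``in particular'' clause is immediate, because $\int_\Omega W(\nabla_h y_h)\d x\le Ch^{2\alpha}$ coincides with the hypothesis of Proposition~\ref{prop:compactness} in the pressureless setting, so all the listed compactness conclusions apply verbatim. I expect the main obstacle to be the case $\pi<0$: without \eqref{hyp:pineg}, a merely quadratic control on $W$ would be unable to dominate the negative linear pressure contribution on $B_h$, where $|\det\nabla_h y_h-1|$ may be arbitrarily large, so the additional linear coercivity is really indispensable in this regime.
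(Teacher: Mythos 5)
Your argument is correct and rests on the same skeleton as the paper's proof: split the domain according to $|\det\nabla_h y_h-1|\le 1$ versus $>1$, use \eqref{eq:growthdet} on the near set and \eqref{hyp:pineg} on the far set when $\pi<0$, note that for $\pi>0$ the pressure term on the far set has a favourable sign, and take $y={id}_h$ for the upper bound in \eqref{eq:infbounds}. The difference is in the absorption mechanism: the paper integrates first, estimates $\int_\Omega W\le Ch^{2\alpha}+Ch^\alpha\|\det\nabla_h y_h-1\|_{L^2(\Omega_h^-)}$ by Cauchy--Schwarz and closes a quadratic inequality for the $L^2$ norm, then obtains the lower bound in \eqref{eq:infbounds} by applying this estimate to an almost-minimizing sequence; you instead absorb pointwise via Young's inequality while keeping half of $W$, which yields a lower bound for $\overline{\mc E}^{\alpha,\pi}_h$ valid for \emph{every} admissible competitor and hence gives $\inf\overline{\mc E}^{\alpha,\pi}_h\ge -Ch^{2\alpha}$ directly, a slightly cleaner route to \eqref{eq:infbounds}. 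Two small points of bookkeeping: your unified displayed inequality includes the term $\tfrac c4\int_{B_h}|\det\nabla_h y_h-1|\d x$ ``for every admissible $y_h$'', but for $\pi>0$ (where \eqref{hyp:pineg} is not assumed) that term is not justified and should simply be dropped — it is also not needed, since \eqref{eq:boundsdetbig} is only claimed for $\pi<0$; and on $B_h$ with $\pi<0$ the retained coefficient $\tfrac c4$ requires $h^\alpha|\pi|\le c/4$ rather than $c/2$, a harmless adjustment since one works with $h$ small (the paper's sketched treatment of $\pi<0$ needs the same smallness to absorb the far-set contribution into the elastic energy).
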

	\begin{proof}
		We will prove the result assuming \eqref{eq:orientationpreserving}, highlighting the changes needed when \eqref{hyp:pineg} holds true instead.
		
		For the sake of clarity let us define the set
		\begin{equation}\label{eq:setdet}
			\Omega_h^-:=\{x\in \Omega:\, |\det\nabla_h y_h(x)-1|\le 1\}.
		\end{equation}
		By the expression \eqref{eq:energybar}, we now deduce
		\begin{align}\label{eq:pi1}
			\int_\Omega W(\nabla_h y_h)\d x&\le C h^{2\alpha}+h^\alpha\pi\int_\Omega\Big(1-\det\nabla_h y_h\Big)\d x\le Ch^{2\alpha}+ h^\alpha\pi\int_{\Omega_h^-}|\det\nabla_h y_h-1|\d x\nonumber\\
			&\le Ch^{2\alpha}+ Ch^\alpha \|\det\nabla_h y_h-1\|_{L^2(\Omega_h^-)}.
		\end{align}
		In the second inequality above we exploited \eqref{eq:orientationpreserving}, so that $\pi>0$ and $\det\nabla_hy_h>0$ a.e. in $\Omega$ since the energy is finite. By means of \eqref{eq:growthdet} we thus infer
		\begin{equation}\label{eq:goal}
			\|\det\nabla_h y_h-1\|_{L^2(\Omega_h^-)}^2\le Ch^{2\alpha}+ Ch^\alpha \|\det\nabla_h y_h-1\|_{L^2(\Omega_h^-)},
		\end{equation}
		whence 
		\begin{equation}\label{eq:goal2}
			\|\det\nabla_h y_h-1\|_{L^2(\Omega_h^-)}^2\le Ch^{2\alpha},
		\end{equation}
		and so we conclude the proof of \eqref{eq:boundsdet}.
		
		On the other hand, if \eqref{hyp:pineg} is in force, we can use the inequality
		\begin{equation*}
			\int_\Omega W(\nabla_h y_h)\d x\le Ch^{2\alpha}+ Ch^\alpha \|\det\nabla_h y_h-1\|_{L^2(\Omega_h^-)}+Ch^{\alpha}\|\det\nabla_h y_h-1\|_{L^1(\Omega\setminus\Omega_h^-)},
		\end{equation*}
		in place of the stronger \eqref{eq:pi1}. Up to absorbing the last term above to the left-hand side by means of \eqref{hyp:pineg}, we again end up with \eqref{eq:goal}. Thus \eqref{eq:goal2} follows, and therefore \eqref{eq:boundsdetbig} as well from \eqref{hyp:pineg}. As a consequence, \eqref{eq:boundsdet} is proved.
		
		We now notice that the upper bound in \eqref{eq:infbounds} is trivial, since $\overline{\mc E}^{\alpha,\pi}_h({id}_h)=0$. To prove also the lower bound, let $y_h$ be such that $\overline{\mc E}^{\alpha,\pi}_h(y_h)\le \inf \overline{\mc E}^{\alpha,\pi}_h + h^{2\alpha}$, so that \eqref{eq:boundsdet} holds (also \eqref{eq:boundsdetbig} if \eqref{hyp:pineg} is assumed). If \eqref{eq:orientationpreserving} holds we can now estimate
		\begin{align*}
			\inf \overline{\mc E}^{\alpha,\pi}_h&\ge  \overline{\mc E}^{\alpha,\pi}_h(y_h)-h^{2\alpha}\ge \pi h^\alpha \int_{\Omega_h^-}(\det\nabla_h y_h -1)\d x-h^{2\alpha}\\
			&\ge -C h^\alpha \|\det\nabla_h y_h-1\|_{L^2(\Omega_h^-)} -h^{2\alpha}\ge - C h^{2\alpha}, 
		\end{align*}
		and a similar argument works assuming \eqref{hyp:pineg}, so we conclude.
	\end{proof}

	\subsection{Kirchhoff regime}
	In this section we compute the $\Gamma$-limit as $h\to 0$ of the functional $\frac{1}{h^2}\overline{\mc E}^{1,\pi}_h$ in case $\alpha=1$, corresponding to nonlinear bending theory (due to Kirchoff), in the spirit of \cite{friesecke.james.mueller1}.
	\begin{thm}\label{thm:Gammabending}
		{\rm\bf [Nonlinear bending theory ($\boldsymbol{\alpha=1}$)]} Under the assumptions of Theorem~\ref{thm:main}, the functionals $\frac{1}{h^2}\overline{\mc E}^{1,\pi}_h$ Mosco converge as $h\to 0$ in the topology of $W^{1,2}(\Omega;\R^3)$ to the functional $\mc E_{\rm ben}+\frac 12 m_\pi |S|$, namely
		\begin{itemize}
			\item if $y_h\xrightharpoonup[h\to 0]{}y$ in the weak topology of $W^{1,2}(\Omega;\R^3)$, then one has 
			\begin{equation}\label{eq:Mliminf}
				\mc E_{\rm ben}(y)+\frac 12 m_\pi |S|\le \liminf\limits_{h\to 0}\frac{1}{h^2}\overline{\mc E}^{1,\pi}_h(y_h);
			\end{equation}
			\item for all $y\in W^{1,2}(\Omega;\R^3)$ there exists $y_h$ such that $y_h\xrightarrow[h\to 0]{}y$ in the strong topology of $W^{1,2}(\Omega;\R^3)$ for which
			\begin{equation}\label{eq:Mlimsup}
				\limsup\limits_{h\to 0}\frac{1}{h^2}\overline{\mc E}^{1,\pi}_h(y_h)\le  \mc E_{\rm ben}(y)+\frac 12 m_\pi |S|.
			\end{equation}			
		\end{itemize}
		The constant $m_\pi$ is defined as
		\begin{equation}\label{def:mpi}
			m_\pi:= \min\limits_{G\in \R^{2\times 2}_{\rm sym}}\{Q_2^\pi(G)+2\pi \tr G\},
		\end{equation}
		where $Q_2^\pi$ has been introduced in \eqref{def:Qpi}.
	\end{thm}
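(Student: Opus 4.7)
We establish Mosco convergence by proving the liminf \eqref{eq:Mliminf} and constructing a recovery sequence for \eqref{eq:Mlimsup}. Both directions rely on the compactness of Proposition \ref{prop:bounds} together with the classical Friesecke--James--M\"uller analysis \cite{friesecke.james.mueller1}; the pressure contribution is treated via the algebraic expansion \eqref{eq:sviluppodet} of the determinant.

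For the liminf, take $y_h\rightharpoonup y$ in $W^{1,2}(\Omega;\R^3)$ with $\overline{\mc E}^{1,\pi}_h(y_h)\le Ch^2$. Proposition \ref{prop:bounds} supplies the hypotheses of Proposition \ref{prop:compactness}, so one extracts rotations $R_h$ and matrices $G_h\rightharpoonup G$ in $L^2$ with $G_{2\times 2}=G_0+x_3II$, while $y$ is an isometric immersion and $II$ is its second fundamental form. The elastic part yields
\[
\liminf_{h\to 0}\frac{1}{h^2}\int_\Omega W(\nabla_h y_h)\d x\ge\frac12\int_\Omega Q_3(G)\d x
\]
exactly as in \cite{friesecke.james.mueller1}, by Taylor expansion of $W$ at $I$ combined with weak lower semicontinuity of the nonnegative quadratic form $Q_3$. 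For the pressure term, on $S_h\times(-1/2,1/2)$ the identity \eqref{eq:sviluppodet} gives $\det\nabla_h y_h=1+h\tr G_h+h^2\iota_2(G_h)+h^3\det G_h$, hence dividing by $h$ and using weak convergence of $\tr G_h$ one obtains $\tfrac{\pi}{h}\int(\det\nabla_h y_h-1)\d x\to\pi\int_\Omega\tr G\d x$; the higher-order terms vanish thanks to the $L^2$-boundedness of $G_h$ together with \eqref{eq:boundsdet}--\eqref{eq:boundsdetbig} (where \eqref{hyp:pineg} is invoked when $\pi<0$), while the complement of $S_h\times(-1/2,1/2)$ is negligible by \eqref{eq:setvanish}. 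Summing,
\[
\liminf_{h\to 0}\frac{1}{h^2}\overline{\mc E}^{1,\pi}_h(y_h)\ge\int_\Omega\Bigl(\tfrac12 Q_3(G)+\pi\tr G\Bigr)\d x.
\]
Because $Q_3(G)=Q_3(\sym G)$, the third column of $G$ and the in-plane entries of its third row combine into a single free vector $b\in\R^3$ via $\sym G=\sym G_{2\times 2}+\sym(b\otimes e_3)$; minimizing over $b$ and invoking definition \eqref{def:Qpi} yields the pointwise lower bound $\tfrac12 Q_2^\pi(\sym G_{2\times 2})+\pi\tr G_{2\times 2}$. Inserting $\sym G_{2\times 2}=\sym G_0+x_3II$, integrating in $x_3\in(-1/2,1/2)$ so that every term odd in $x_3$ vanishes, and simplifying through \eqref{eq:Qrewriting}, one arrives at
\[
\int_S\Bigl[\frac{1}{24}Q_2(II)+\frac12\bigl(Q_2^\pi(\sym G_0)+2\pi\tr(\sym G_0)\bigr)\Bigr]\d x'\ge\mc E_{\rm ben}(y)+\frac{m_\pi}{2}|S|,
\]
by the pointwise definition \eqref{def:mpi} of $m_\pi$.

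For the recovery sequence, Theorem \ref{thm:density} together with a standard diagonal argument reduces the problem to smooth $y\in W^{2,2}_{\rm iso}\cap C^\infty$. Let $\bar G^*\in\R^{2\times 2}_{\rm sym}$ realize \eqref{def:mpi} and let $\bar a^*\in\R^3$ attain the corresponding minimum in \eqref{def:Qpi} at $\bar G^*$. Setting $R(x')=(\nabla'y(x')|b(x'))\in SO(3)$, I would propose the ansatz
\[
y_h(x)=y(x')+h\beta(x')+hx_3 b(x')+h^2 R(x')d(x',x_3),
\]
with $\beta:S\to\R^3$ smooth and chosen so that the symmetric part of the $2\times 2$ block of the limiting $G_0$ equals $\bar G^*$ (solvable pointwise by a direct computation in the tangent frame $\{\partial_1 y,\partial_2 y,b\}$), and $d$ polynomial in $x_3$ with $\partial_3 d(x',x_3)=\bar a^*$ (producing the optimal third column). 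Expanding $W(\nabla_h y_h)$ to second order and $\det\nabla_h y_h$ via \eqref{eq:sviluppodet}, both terms of the energy converge to the very quantities appearing in the liminf computation, whose sum equals $\mc E_{\rm ben}(y)+\tfrac12 m_\pi|S|$.

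\textbf{Main obstacle.} The substantive novelty with respect to \cite{friesecke.james.mueller1} is that the pressureless choice $\sym G_0=0$ no longer saturates the liminf: the linear trace term coming from the pressure couples to $Q_2^\pi(\sym G_0)$ and forces a nontrivial minimizer $\bar G^*\ne 0$ in \eqref{def:mpi}. Engineering an explicit correction $\beta$ that realizes this $\bar G^*$ while remaining compatible with the isometry of $y$, and verifying that the cubic-in-$G_h$ remainder $h^2\pi\int\det G_h\d x$ in the pressure expansion is genuinely $o(1)$ (for which the determinant bounds of Proposition \ref{prop:bounds}, together with \eqref{hyp:pineg} when $\pi<0$, are essential), are the technically most delicate steps of the construction.
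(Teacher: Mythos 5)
Your strategy coincides with the paper's (FJM lower bound for the elastic part, expansion \eqref{eq:sviluppodet} for the pressure, pointwise relaxation through $Q_2^\pi$ and $m_\pi$; density of smooth isometries plus a modified Kirchhoff ansatz for the recovery), and the algebraic reduction of $\frac12 Q_3(G)+\pi\tr G$ to $\frac1{24}Q_2(II)+\frac12\big(Q_2^\pi(G_0)+2\pi\tr G_0\big)\ge \mc E_{\rm ben}(y)+\frac12 m_\pi|S|$ is correct. However, in the liminf your treatment of the determinant remainder has a genuine gap: you claim the higher-order terms vanish "thanks to the $L^2$-boundedness of $G_h$ together with \eqref{eq:boundsdet}--\eqref{eq:boundsdetbig}", but $h^2\int\det G_h\,\d x$ is cubic in $G_h$ and an $L^2$ bound gives no control on it whatsoever. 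The paper's proof needs the truncation set $B_h=\{h^{1/2}|G_h|\le 1\}$ (small complement by Chebyshev), on which $h^2\int_{B_h}|\det G_h|\,\d x\le Ch^{3/2}\|G_h\\|_{L^2}^2$, the inclusion $B_h\cap\big(S_h\times(-\frac12,\frac12)\big)\subseteq\Omega_h^-$, and, on the complement, a return to $\det\nabla_h y_h-1$ estimated via \eqref{eq:boundsdet}. Likewise, discarding the region where $|\det\nabla_h y_h-1|>1$ must be justified (for $\pi>0$ it carries a nonnegative contribution because $\det>0$ forces $\det-1>1$ there; for $\pi<0$ one needs \eqref{hyp:pineg} through \eqref{eq:boundsdetbig}); you only allude to this. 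These are exactly the points where the pressure term requires new work beyond \cite{friesecke.james.mueller1}, and you flag them as "delicate" rather than proving them.

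The recovery sequence has two further gaps. First, producing an order-$h$ in-plane corrector $\beta$ with $\sym(\nabla' y^T\nabla'\beta)=\overline G$ is not "solvable pointwise": it is a constraint on the gradient of $\beta$, not an algebraic one. The paper resolves it with the explicit choice $\beta(x')=\nabla' y(x')\overline G x'$, which works because for an isometric immersion $\nabla' y^T\partial_k\nabla' y=0$, whence $\nabla' y(x')^T\nabla'(\nabla' y(x')\overline G x')=\overline G$; you leave precisely this step open as your "main obstacle". Second, your third-column corrector $d$ with $\partial_3 d\equiv\bar a^*$ only optimizes the membrane/pressure block $Q_2^\pi(\overline G)$: it misses the $x_3$-linear third-column corrector $\overline d(x')$ (the coefficient of $h^2x_3^2/2$ in the paper's ansatz \eqref{eq:recovery} and choice \eqref{eq:dcbar}) that relaxes the bending block from $Q_3$ to $Q_2(II(x'))$. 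As written, your sequence converges to $\frac1{24}\int_S Q_3$ evaluated at a matrix with a generically suboptimal third column, which strictly overshoots $\mc E_{\rm ben}(y)$, so \eqref{eq:Mlimsup} is not attained; note also that this $x_3$-odd corrector does not couple to the pressure in the limit, which is exactly why $Q_2$ rather than $Q_2^\pi$ appears in the bending term. Your appeal to Theorem \ref{thm:density} (hence to hypothesis \eqref{eq:regS}) and to the continuity of $\mc E_{\rm ben}$ is the same reduction the paper uses and is fine.
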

	\begin{proof}
		We perform the proof under the assumption \eqref{eq:orientationpreserving}. The case with \eqref{hyp:pineg} can be handled similarly by exploiting also \eqref{eq:boundsdetbig}.
		
		We start proving the liminf inequality \eqref{eq:Mliminf}. Without loss of generality we can assume that $\liminf\limits_{h\to 0}\frac{1}{h^2}\overline{\mc E}^{1,\pi}_h(y_h)$ is finite, so that by Proposition~\ref{prop:bounds} and then  Proposition~\ref{prop:compactness} we actually have that $y_h\xrightarrow[h\to 0]{}y$ strongly in $W^{1,2}(\Omega;\R^3)$, and $y\in W^{2,2}_{\rm iso}(S;\R^3)$.
		
		Moreover, by \eqref{eq:Gh}, we know that for $x\in \widehat\Omega_h:= S_h\times\left(-\frac 12,\frac 12\right)$ we can write $\nabla_h y_h(x)=R_h(x')(I+h G_h(x))$. Hence, by recalling the notation \eqref{eq:setdet}, we infer
		\begin{align}\label{eq:ineq1}
			\frac{1}{h^2}\overline{\mc E}^{1,\pi}_h(y_h)&=\frac{1}{h^2}\int_\Omega W(\nabla_h y_h)\d x+\frac{\pi}{h}\int_\Omega(\det\nabla_h y_h-1)\d x\nonumber\\
			&\ge \frac{1}{h^2}\int_{\widehat\Omega_h} W(I+h G_h)\d x+\frac{\pi}{h}\int_{\Omega_h^-}(\det\nabla_h y_h-1)\d x\\
			&=\frac{1}{h^2}\int_{\widehat\Omega_h}\!\! W(I+h G_h)\d x+\frac{\pi}{h}\int_{\Omega_h^-}(\det(I+h G_h){-}1)\d x+\frac{\pi}{h}\int_{\Omega_h^-\setminus \widehat\Omega_h}(\det\nabla_h y_h{-}1)\d x.\nonumber
		\end{align}
		We point out that in the second integral above we exploited the identity $G_h=0$ outside $\widehat\Omega_h$.
		
		By using the results contained in \cite[Proof of Theorem 6.1(i)]{friesecke.james.mueller1} and exploiting \eqref{eq:Ghconv}, one can prove that
		\begin{equation}\label{eq:ineq2}
			\liminf\limits_{h\to 0}\frac{1}{h^2}\int_{\widehat\Omega_h} W(I+h G_h)\d x\ge \frac 12 \int_\Omega Q_3(G(x)) \d x.
		\end{equation}
		Moreover, we observe that by \eqref{eq:boundsdet} and \eqref{eq:setvanish} we have
		\begin{equation}\label{eq:vanish1}
			\left| \frac{1}{h}\int_{\Omega_h^-\setminus \widehat\Omega_h}(\det\nabla_h y_h{-}1)\d x\right|\le \frac 1h \|\det\nabla_h y_h{-}1\|_{L^2(\Omega_h^-)}|\Omega\setminus\widehat\Omega_h|^{1/2}\le C|\Omega\setminus\widehat\Omega_h|^{1/2}\xrightarrow[h\to 0]{}0.
		\end{equation}
		In order to deal with the second term in the last line of \eqref{eq:ineq1} we first introduce the set
		\begin{equation}\label{eq:Bh}
			B_h:=\{x\in \Omega: h^{1/2}|G_h(x)|\le 1\},
		\end{equation}
		and we notice that $\lim\limits_{h\to 0}|\Omega\setminus B_h|=0$ by Chebyshev's inequality. We now split the integral under consideration into
		\begin{align*}
			\frac{1}{h}\int_{\Omega_h^-}(\det(I+h G_h){-}1)\d x&= \frac{1}{h}\int_{\Omega_h^-\cap B_h}(\det(I+h G_h){-}1)\d x+\frac{1}{h}\int_{\Omega_h^-\setminus B_h}(\det(I+h G_h){-}1)\d x\\
			&= \frac{1}{h}\int_{\Omega_h^-\cap B_h}(\det(I+h G_h){-}1)\d x+\frac{1}{h}\int_{(\Omega_h^-\setminus B_h)\cap \widehat\Omega_h}(\det\nabla_h y_h{-}1)\d x,
		\end{align*}
		and we observe that the second term above vanishes as $h\to 0$ since by \eqref{eq:boundsdet} there holds
		\begin{equation}\label{eq:vanish2}
			\frac{1}{h}\int_{(\Omega_h^-\setminus B_h)\cap \widehat\Omega_h}|\det\nabla_h y_h{-}1|\d x\le \frac 1h \|\det\nabla_h y_h{-}1\|_{L^2(\Omega_h^-)}|\Omega\setminus B_h|^{1/2}\le C|\Omega\setminus B_h|^{1/2}\xrightarrow[h\to 0]{}0.
		\end{equation}
		
		As regards the first term, we argue as in \cite{MoraRiva}, and we exploit \eqref{eq:sviluppodet} to write
		\begin{align*}
			& \frac{1}{h}\int_{\Omega_h^-\cap B_h}(\det(I+h G_h){-}1)\d x\\
			=& \int_{\Omega_h^-\cap B_h}\tr G_h\d x+ h \int_{\Omega_h^-\cap B_h}\iota_2(G_h)\d x+ h^2 \int_{\Omega_h^-\cap B_h}\det G_h \d x.
		\end{align*}
		By using \eqref{eq:Ghconv} and exploiting the definition \eqref{eq:Bh} of $B_h$, we now deduce
		\begin{subequations}\label{eq:vanish3}
			\begin{align}
				\bullet\, & h \int_{\Omega_h^-\cap B_h}|\iota_2(G_h)|\d x\le C h\|G_h\|_2^2\le C h \xrightarrow[h\to 0]{}0,\\
				\bullet\, & h^2 \int_{\Omega_h^-\cap B_h}|\det G_h| \d x\le C h^2 \int_{\Omega_h^-\cap B_h}|G_h|^2 \cdot h^{-1/2}\d x\le C h^{3/2} \|G_h\|_2^2\le C h^{3/2} \xrightarrow[h\to 0]{}0,\\
				\bullet\, &\lim\limits_{h\to 0} \int_{\Omega_h^-\cap B_h}\tr G_h\d x=\int_\Omega \tr G\d x,
			\end{align}
		\end{subequations}
		where the last limit holds true since $|\Omega\setminus (\Omega_h^-\cap B_h)|$ vanishes as $h\to 0$. Indeed, we already know that $\lim\limits_{h\to 0}|\Omega\setminus  B_h|=0$; moreover let us show that
		\begin{equation}\label{eq:claim}
			B_h\cap \widehat{\Omega}_h\subseteq \Omega_h^-,
		\end{equation}
		which would imply $|\Omega\setminus  \Omega_h^-|\le |\Omega\setminus  (B_h\cap \widehat{\Omega}_h)|\le |\Omega\setminus  B_h|+ |\Omega\setminus  \widehat{\Omega}_h| \xrightarrow[h\to 0]{} 0$. Pick $x\in B_h\cap \widehat{\Omega}_h$, so that by \eqref{eq:sviluppodet} we have
		\begin{equation}\label{eq:contained}
			\begin{aligned}
				|\det\nabla_h y_h(x)-1|&=|\det(I+h G_h(x))-1|= h|\tr G_h(x)+h \iota_2(G_h(x))+h^2\det G_h(x)|\\
				&\le C h |G_h(x)|(1+h |G_h(x)|+ h^2 |G_h(x)|^2)\le C h^{1/2}(1+h^{1/2} +h)\xrightarrow[h\to 0]{}0,
			\end{aligned}
		\end{equation}
		whence $x\in \Omega_h^-$ (for $h$ small enough) and so \eqref{eq:claim} holds true.
		
		By putting together \eqref{eq:ineq1}, \eqref{eq:ineq2}, \eqref{eq:vanish1}, \eqref{eq:vanish2} and \eqref{eq:vanish3} we thus have proved that
		\begin{equation*}
			\liminf\limits_{h\to 0}\frac{1}{h^2}\overline{\mc E}^{1,\pi}_h(y_h)\ge \frac 12\int_\Omega Q_3(G(x))\d x+\pi \int_\Omega \tr G(x)\d x.
		\end{equation*}
		By using definitions \eqref{def:Qpi}, \eqref{def:mpi}, and recalling \eqref{eq:Gdecomp} and \eqref{eq:Qrewriting} we finally infer
		\begin{align}\label{eq:finalstep}
			\liminf\limits_{h\to 0}\frac{1}{h^2}\overline{\mc E}^{1,\pi}_h(y_h)&\ge \frac 12\int_\Omega \Big(Q_2^\pi(G_{2\times 2}(x))+2\pi \tr G_{2\times 2}(x)\Big)\d x\nonumber\\
			&= \frac{1}{24}\int_S Q_2(II(x'))\d x'+ \frac 12 \int_S\Big(Q_2^\pi(G_0(x'))+2\pi \tr G_0(x')\Big) \d x'\\
			&\ge \mc E_{\rm ben}(y)+ \frac 12 m_\pi |S|.\nonumber
		\end{align}
		
		Let us now show the limsup inequality \eqref{eq:Mlimsup}. Since $\mc E_{\rm ben}$ is continuous in $W^{2,2}_{\rm iso}(S;\R^3)$, by Theorem~\ref{thm:density} it is enough to show the inequality for $y\in W^{2,2}_{\rm iso}(S;\R^3)\cap  C^{\infty}(\overline S;\R^3)$.
		
		Let $\overline{G}\in\R^{2\times 2}_{\rm sym}$ be such that 
		\begin{equation}\label{eq:Gbar}
			m_\pi=Q_2^\pi (\overline G)+2\pi\tr\overline G,
		\end{equation}
		and for $c,d\in C^\infty (\overline S;\R^3)$ let us consider
		\begin{equation}\label{eq:recovery}
			y_h(x):=y(x')+h\big(\nabla' y(x')\overline G x'+x_3 b(x')\big)+h^2\left(x_3 c(x')+\frac{x_3^2}{2}d(x')\right),
		\end{equation}
		where $b$ is the normal vector defined in \eqref{eq:normalvector}. Notice that clearly $y_h$ converges to $y$ strongly in $W^{1,2}(\Omega;\R^3)$ as $h\to 0$. By simple computations we obtain
		\begin{equation*}
			\nabla_h y_h(x)=R(x')+h\Big[\big(\nabla'(\nabla' y(x')\overline G x')|c(x')\big)+x_3(\nabla'b(x')|d(x'))\Big]+h^2 x_3\left(\nabla' c(x'){+}\frac{x_3}{2}\nabla'd(x')|0\right),
		\end{equation*}
		where we set $R(x'):=(\nabla' y(x')|b(x'))\in SO(3)$. By defining
		\begin{align*}
			G_h(x):=&\frac{R(x')^T\nabla_h y_h(x)-I}{h}\\
			=& R(x')^T\Big[\!\big(\nabla'(\nabla' y(x')\overline G x')|c(x')\big){+}x_3(\nabla'b(x')|d(x'))\Big]{+}h x_3 R(x')^T\!\left(\!\nabla' c(x'){+}\frac{x_3}{2}\nabla'd(x')|0\right)\!,
		\end{align*}
		one easily deduces that, as $h\to 0$, the sequence $G_h$ strongly converges in $L^2(\Omega;\R^{3\times 3})$ to the matrix $G$ defined as
		\begin{equation}\label{eq:G1G2}
			G(x):=\underbrace{ R(x')^T\big(\nabla'(\nabla' y(x')\overline G x')|c(x')\big)}_{=: G_1(x')}+x_3\underbrace{ R(x')^T(\nabla'b(x')|d(x'))}_{=: G_2(x')}.
		\end{equation}
		
		We now observe that, since $y\in W^{2,2}_{\rm iso}(S;\R^3)\cap  C^{\infty}(\overline S;\R^3)$, there holds (see also \cite[proof of Theorem 2.6]{agostiniani.lucantonio.lucic})
		\begin{equation*}
			\nabla' y(x')^T\nabla'(\nabla' y(x')\overline G x')=\overline G,
		\end{equation*}
		whence we can write
		\begin{equation}\label{eq:formG}
			G_{2\times 2}(x)=\overline G+x_3II(x').
		\end{equation}
		By exploiting the smoothness of $y,c,d$ we now infer
		\begin{align*}
			W(\nabla_h y_h(x))&= W(I+h G_h(x))=\frac{h^2}{2}Q_3(G_1(x')+x_3 G_2(x'))+O(h^3)\\
			&=\frac{h^2}{2}\Big(Q_3(G_1(x'))+x_3^2 Q_3(G_2(x'))+2x_3 D^2W(I)[G_1(x')][G_2(x')]\Big)+O(h^3),
		\end{align*}
		and
		\begin{align*}
			\det\nabla_h y_h(x)-1&=\det(I+h G_h(x))-1= h\tr G_h(x)+h^2 \iota_2(G_h(x))+h^3\det G_h(x)\\
			&=h(\tr G_1(x')+x_3 \tr G_2(x'))+O(h^2),
		\end{align*}
		where the Landau notation $O(\cdot)$ is uniform in $x\in \Omega$.
		
		Thus we obtain
		\begin{align*}
			\frac{1}{h^2}\overline{\mc E}^{1,\pi}_h(y_h)&= \frac{1}{h^2}\int_\Omega W(\nabla_h y_h(x))\d x +\frac \pi h\int_\Omega(\det\nabla_h y_h(x)-1) \d x\\
			&= \frac{1}{24}\int_S Q_3(G_2(x'))\d x'+\frac 12\int_S\big(Q_3(G_1(x'))+2\pi\tr G_1(x')\big)\d x'+O(h).
		\end{align*}
		
		By using the definition \eqref{eq:G1G2} of $G_1$ and $G_2$, and recalling \eqref{eq:formG}, we finally deduce
		\begin{align}\label{eq:finalineq}
			&\,\lim\limits_{h\to 0} \frac{1}{h^2}\overline{\mc E}^{1,\pi}_h(y_h)\nonumber\\
			=&\frac{1}{24}\int_S Q_3\begin{pmatrix}
				\begin{array}{c|c}
					II(x') & \frac 12(\nabla'y(x')^Td(x')+\nabla' b(x')^T b(x'))\\
					\hline 0\,\, 0 & b(x')\cdot d(x')
				\end{array}
			\end{pmatrix}\d x'\\
			&+ \frac 12\int_S\left[ Q_3\begin{pmatrix}
				\begin{array}{c|c}
					\overline G & \frac 12(\nabla'y(x')^Tc(x')+( b(x')^T\nabla'(\nabla'y(x')\overline G x'))^T)\\
					\hline 0\,\, 0 & b(x')\cdot c(x')
				\end{array}
			\end{pmatrix} +2\pi b(x')\cdot c(x')\right] \d x'\nonumber\\
			&+ \pi \tr \overline G |S|\nonumber.
		\end{align}
		
		We now choose
		\begin{subequations}\label{eq:dcbar}
			\begin{align}
				\overline d(x')&:= R(x')\left[2\overline\alpha(x')-\begin{pmatrix}\nabla' b(x')^T b(x')\\\overline\alpha_3(x')
				\end{pmatrix}\right],\\
				\overline c(x')&:= R(x')\left[\overline\beta-\begin{pmatrix}{\nabla'(\nabla'y(x')\overline G x')^T b(x')}\\{\overline\beta_3}\end{pmatrix}\right],
			\end{align}
		\end{subequations}
		where $\overline\alpha$ and $\overline\beta$ satisfy
		\begin{align*}
			Q_2(II(x'))&=Q_3(II(x')+\overline\alpha (x')\otimes e_3),\\
			Q_2^\pi (\overline G)&= Q_3(\overline G+\overline\beta \otimes e_3)+2\pi\overline \beta.
		\end{align*}
		Since under our assumption  $II\in C^\infty(\overline S;\R^{2\times 2}_{\rm sym})$, we observe that $\overline\alpha\in C^\infty(\overline S;\R^{3})$, whence $\overline d$ and $ \overline c$ belong to $C^\infty(\overline S;\R^{3})$ as well.
		
		By plugging $\overline d, \overline c$ defined in \eqref{eq:dcbar} into \eqref{eq:recovery}, from \eqref{eq:finalineq} and recalling \eqref{eq:Gbar} we finally deduce
		\begin{align*}
			\lim\limits_{h\to 0} \frac{1}{h^2}\overline{\mc E}^{1,\pi}_h(y_h)=\frac{1}{24}\int_S Q_3(II(x'))\d x'+\frac 12(	Q_2^\pi (\overline G)+2\pi\tr\overline G)|S|=\mc E_{\rm ben}(y)+\frac 12 m_\pi |S|,
		\end{align*}
		and we conclude.
	\end{proof}
	
	\subsection{Linearized bending and von K\'arm\'an-type regimes}
	We finally focus on the cases $\alpha>1$, corresponding to linearized bending ($\alpha\in(1,2)$) and von K\'arm\'an-type ($\alpha\ge 2$) regimes.
	\begin{thm}{\rm\bf [Von K\'arm\'an-type theory ($\boldsymbol{\alpha>1}$)]}
		Under the assumptions of Theorem~\ref{thm:main}, the functionals $\frac{1}{h^{2\alpha}}\overline{\mc E}^{\alpha,\pi}_h$ $\Gamma$-converge as $h\to 0$ with respect to the convergence \eqref{eq:defuv} and \eqref{eq:convuv} to the functional $\mc E^{\alpha,\pi}:W^{1,2}(S;\R^2)\times W^{2,2}(S)\to (-\infty,+\infty]$ defined as
		\begin{equation*}
			\mc E^{\alpha,\pi}(u,v):=\begin{cases}\displaystyle
				\mc E_{\rm ben, lin}(u,v)+\frac 12 m_\pi|S|,& \text{if }\alpha\in(1,2),\\
				\displaystyle\mc E^\pi_{\rm vK}(u,v)+\frac{\pi^2}{2}\kappa|S|,& \text{if }\alpha=2,\\
				\displaystyle\mc E^\pi_{\rm vK, lin}(u,v)+\frac{\pi^2}{2}\kappa|S|,& \text{if }\alpha>2,
			\end{cases}
		\end{equation*}
		where the constants $m_\pi$ and $\kappa$ have been introduced in \eqref{def:mpi} and \eqref{eq:Qrewriting}, respectively.
		
		In other words, the following two conditions hold:
		
		\begin{itemize}
			\item if $y_h\xrightarrow[h\to 0]{}(u,v)$ in the sense that there exist rotations $R_h\colon S\to SO(3)$, constant rotations $\overline R _h\in SO(3)$ and constant vectors $c_h\in \R^3$ for which \eqref{eq:defuv} and \eqref{eq:convuv} hold, then one has 
			\begin{equation}\label{eq:Gliminf}
				\mc E^{\alpha,\pi}(u,v)\le \liminf\limits_{h\to 0}\frac{1}{h^{2\alpha}}\overline{\mc E}^{\alpha,\pi}_h(y_h);
			\end{equation}
			\item for all $(u,v)\in W^{1,2}(S;\R^2)\times W^{2,2}(S)$ there exists $y_h$ such that $y_h\xrightarrow[h\to 0]{}(u,v)$ in the above sense, except that \eqref{eq:Ghconv2} holds for $\sym G_h$ in place of $G_h$ if $\alpha\ge 2$, for which
			\begin{equation}\label{eq:Glimsup}
				\limsup\limits_{h\to 0}\frac{1}{h^{2\alpha}}\overline{\mc E}^{\alpha,\pi}_h(y_h)\le  \mc E^{\alpha,\pi}(u,v).
			\end{equation}			
		\end{itemize}		
	\end{thm}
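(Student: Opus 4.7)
I would prove the statement by standard $\Gamma$-convergence, following \cite{friesecke.james.mueller2} and incorporating the pressure-specific modifications already carried out for $\alpha=1$ in Theorem~\ref{thm:Gammabending}. The compactness of any sequence $y_h$ with $\overline{\mc E}_h^{\alpha,\pi}(y_h)\le Ch^{2\alpha}$ is provided by Proposition~\ref{prop:bounds} combined with Proposition~\ref{prop:compactness}: in particular $\|G_h\|_{L^2(\Omega)}\le C$ and, on a set $\widehat\Omega_h\subseteq\Omega$ whose complement is infinitesimal in measure, one has $\nabla_h\tilde y_h=R_h(I+h^\alpha G_h)$. The proof then rests on the splitting
\begin{equation*}
\frac{1}{h^{2\alpha}}\overline{\mc E}_h^{\alpha,\pi}(y_h)=\frac{1}{h^{2\alpha}}\int_\Omega W(\nabla_h y_h)\d x+\frac{\pi}{h^\alpha}\int_\Omega(\det\nabla_h y_h-1)\d x.
\end{equation*}

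\textbf{Liminf inequality.} The elastic term is handled by the classical FJM estimate $\liminf\frac{1}{h^{2\alpha}}\int_\Omega W(\nabla_h y_h)\d x\ge\frac 12\int_\Omega Q_3(G)\d x$, with $G$ the weak $L^2$-limit of $G_h$ (respectively of $\sym G_h$ when $\alpha\ge 2$). For the pressure term, \eqref{eq:sviluppodet} applied on $\widehat\Omega_h$ with $h^\alpha$ in place of $h$ gives
\begin{equation*}
\frac{\pi}{h^\alpha}(\det\nabla_h y_h-1)=\pi\tr G_h+\pi h^\alpha\iota_2(G_h)+\pi h^{2\alpha}\det G_h.
\end{equation*}
The linear summand converges (after integration) to $\pi\int_\Omega\tr G\d x$ by the weak convergence of $\sym G_h$, since $\tr G_h=\tr\sym G_h$; the quadratic summand is $O(h^\alpha\|G_h\|_2^2)=o(1)$; the cubic summand is controlled via a truncation $B_h:=\{|G_h|\le h^{-\alpha/2}\}$ (analogous to \eqref{eq:Bh}) through the inclusion $B_h\cap\widehat\Omega_h\subseteq\Omega_h^-$ (cf.~\eqref{eq:claim}), the Chebyshev bound $|\Omega\setminus B_h|\le Ch^\alpha$, and the estimates \eqref{eq:boundsdet}--\eqref{eq:boundsdetbig}. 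Collecting everything one obtains $\liminf\ge\frac 12\int_\Omega[Q_3(G)+2\pi\tr G]\d x$; the frame-indifference of $Q_3$ together with the definition \eqref{def:Qpi} of $Q_2^\pi$ provides the pointwise inequality $Q_3(G)+2\pi G_{33}\ge Q_2^\pi(G_{2\times 2})$, so that
\begin{equation*}
\liminf_{h\to 0}\frac{1}{h^{2\alpha}}\overline{\mc E}_h^{\alpha,\pi}(y_h)\ge\frac 12\int_\Omega\bigl(Q_2^\pi(G_{2\times 2})+2\pi\tr G_{2\times 2}\bigr)\d x.
\end{equation*}
Substituting \eqref{eq:Gv} and integrating in $x_3\in(-1/2,1/2)$ (odd powers vanish and $\int x_3^2\d x_3=1/12$), then using \eqref{eq:Qrewriting} together with the identifications \eqref{eq:G0vK}, \eqref{eq:G0vKlin} of $\sym G_0$ (respectively the pointwise bound $Q_2^\pi(G_0)+2\pi\tr G_0\ge m_\pi$ when $\alpha\in(1,2)$ and $\sym G_0$ is unconstrained), one recovers precisely $\mc E^{\alpha,\pi}(u,v)$.

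\textbf{Limsup inequality.} The recovery sequence is a direct generalization of \eqref{eq:recovery}, replacing $h$ by $h^\alpha$ in the higher-order correctors and including auxiliary profiles $c,d\in C^\infty(\overline S;\R^3)$ chosen as in \eqref{eq:dcbar}. For $\alpha\ge 2$ the macroscopic $\sym G_0$ is prescribed by $(u,v)$ through \eqref{eq:G0vK} or \eqref{eq:G0vKlin}, so one only optimizes $c$ to attain $Q_2^\pi(\sym G_0)$ in the elastic--pressure term and $d$ to attain $Q_2((\nabla')^2v)$ in the bending term. For $\alpha\in(1,2)$ the profile $\sym G_0$ is additionally free and is chosen equal to a constant minimizer $\overline G$ of $G\mapsto Q_2^\pi(G)+2\pi\tr G$, producing the constant $\frac 12 m_\pi|S|$. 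A density argument reduces to smooth $(u,v)$ (for $\alpha\in(1,2)$ one also needs smooth solutions of the linear isometry constraint \eqref{eq:linconstr}, which is standard as in \cite{friesecke.james.mueller2}). Taylor-expanding $W$ and $\det$ on the resulting ansatz, exactly as in the proof of Theorem~\ref{thm:Gammabending}, gives $\lim\frac{1}{h^{2\alpha}}\overline{\mc E}_h^{\alpha,\pi}(y_h)=\mc E^{\alpha,\pi}(u,v)$.

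\textbf{Main obstacle.} The most delicate point is the control of the cubic remainder $\pi h^{2\alpha}\int\det G_h\d x$ in the liminf, since only the $L^2$-bound on $G_h$ is available and the naive pointwise estimate $|\det G_h|\le C|G_h|^3$ is non-integrable in general. This is resolved by the truncation and region-splitting built around $B_h$ and $\widehat\Omega_h$, in the same spirit as \eqref{eq:Bh}--\eqref{eq:vanish3}. A minor secondary subtlety, present only when $\alpha\ge 2$, is that only the symmetric part of $G_h$ converges weakly; this causes no problem since the pressure linear functional $G\mapsto\int\tr G\d x$ depends only on the symmetric part.
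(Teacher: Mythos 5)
Your proposal is correct and follows essentially the same route as the paper's proof: the same splitting of $\frac{1}{h^{2\alpha}}\overline{\mc E}^{\alpha,\pi}_h$, the FJM lower bound for the elastic part, the determinant expansion \eqref{eq:sviluppodet} with the truncation set $B^\alpha_h$ and the bound \eqref{eq:boundsdet} to isolate the trace term, the pointwise reduction $Q_3(G)+2\pi\tr G\ge Q_2^\pi(G_{2\times 2})+2\pi\tr G_{2\times 2}$, and recovery sequences built on the FJM2 ansatz with pressure-optimal correctors $c,d$ and, for $\alpha\in(1,2)$, the constant minimizer $\overline G$ of $Q_2^\pi+2\pi\tr$. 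The only cosmetic discrepancies (invoking an exceptional set $\widehat\Omega_h$, which for $\alpha>1$ is unnecessary since the decomposition $\nabla_h\widetilde y_h=R_h(I+h^\alpha G_h)$ holds on all of $\Omega$, and attributing the liminf trace convergence to $\sym G_h$ rather than $G_h$) do not affect the argument.
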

	\begin{proof}
		Again, we assume \eqref{eq:orientationpreserving}. The case with \eqref{hyp:pineg} can be treated in an analogous way by using also \eqref{eq:boundsdetbig}.\medskip
		
		\underline{\textbf{Liminf inequality.}}	Let us first prove the liminf inequality \eqref{eq:Gliminf}. Without loss of generality we can assume that $\liminf\limits_{h\to 0}\frac{1}{h^{2\alpha}}\overline{\mc E}^{\alpha,\pi}_h(y_h)$ is finite, so that $\det\nabla_h y_h$ is positive almost everywhere in $\Omega$ and Propositions~\ref{prop:bounds} and \ref{prop:compactness} apply.
		
		Moreover, by \eqref{eq:Gh2}, we can write $\nabla_h y_h(x)=\overline R_h R_h(x')(I+h^\alpha G_h(x))$. Hence, by recalling the notation \eqref{eq:setdet}, we infer
		\begin{equation}\label{eq:comp1}
			\begin{aligned}
				\frac{1}{h^{2\alpha}}\overline{\mc E}^{\alpha,\pi}_h(y_h)&=\frac{1}{h^{2\alpha}}\int_\Omega W(I+h^\alpha G_h(x)) \d x+\frac{\pi}{h^\alpha}\int_\Omega (\det(1+h^\alpha G_h(x))-1) \d x\\
				&\ge \frac{1}{h^{2\alpha}}\int_\Omega W(I+h^\alpha G_h(x)) \d x+\frac{\pi}{h^\alpha}\int_{\Omega_h^-} (\det(1+h^\alpha G_h(x))-1) \d x.
			\end{aligned}
		\end{equation}
		
		Similarly to \eqref{eq:Bh}, we now introduce the set
		\begin{equation*}
			B_h^\alpha:=\{x\in \Omega: h^{\alpha/2}|G_h(x)|\le 1\},
		\end{equation*}
		and we observe again that $\lim\limits_{h\to 0}|\Omega\setminus B^\alpha_h|=0$ by Chebyshev's inequality.
		
		By using the results contained in \cite[Proof of Theorem 6.1(i)]{friesecke.james.mueller1}, exploiting \eqref{eq:Ghconv2} one obtains
		\begin{equation}\label{eq:comp2}
			\liminf\limits_{h\to 0}\frac{1}{h^{2\alpha}}\int_\Omega W(I+h^\alpha G_h(x)) \d x\ge \frac 12 \int_\Omega Q_3(G(x)) \d x.
		\end{equation}
		
		Arguing similarly to the proof of Theorem~\ref{thm:Gammabending}, using \eqref{eq:sviluppodet} and observing that $B_h^\alpha\subseteq \Omega_h^-$ (for $h$ small enough) by reasoning as in \eqref{eq:contained}, we first write
		\begin{align*}
			&\,\frac{1}{h^\alpha}\int_{\Omega_h^-} (\det(1+h^\alpha G_h(x))-1) \d x\\
			=& \int_{B^\alpha_h}\tr G_h(x) \d x+h^\alpha\int_{B^\alpha_h}\iota_2(G_h(x)) \d x+h^{2\alpha} \int_{B^\alpha_h}\det G_h(x) \d x\\
			&+ \frac{1}{h^\alpha}\int_{\Omega_h^-\setminus B_h^\alpha}(\det\nabla_h y_h(x)-1)\d x,
		\end{align*}
		whence we deduce
		\begin{equation}\label{eq:comp3}
			\lim\limits_{h\to 0} \frac{1}{h^\alpha}\int_{\Omega_h^-} (\det(1+h^\alpha G_h(x))-1) \d x= \int_\Omega \tr G(x)\d x.
		\end{equation}
		
		By \eqref{eq:comp1}, \eqref{eq:comp2}, \eqref{eq:comp3} and recalling \eqref{def:Qpi} we finally infer
		\begin{align*}
			\liminf\limits_{h\to 0}\frac{1}{h^{2\alpha}}\overline{\mc E}^{\alpha,\pi}_h(y_h)&\ge \frac 12 \int_\Omega \Big(Q_3(G(x))+2\pi\tr G(x)\Big)\d x\\
			&\ge \frac 12 \int_\Omega \Big(Q_2^\pi(G_{2\times 2}(x))+2\pi\tr G_{2\times 2}(x)\Big)\d x.
		\end{align*}
		If $\alpha\in (1,2)$, we conclude by arguing as in \eqref{eq:finalstep} using \eqref{eq:Gv} in place of \eqref{eq:Gdecomp}. If $\alpha\ge 2$ instead, we conclude by also exploiting \eqref{eq:G0vK} and \eqref{eq:G0vKlin}, and by recalling \eqref{eq:Qrewriting}.	\medskip
		
		\underline{\textbf{Limsup inequality.}} We now show the validity of the limsup inequality \eqref{eq:Glimsup}. 
		
		\textbf{(Case $\boldsymbol{\alpha\in(1,2)}$).} Assume first that $\alpha\in (1,2)$ and let $(u,v)\in  W^{1,2}(S;\R^2)\times W^{2,\infty}(S)$ such that \eqref{eq:linconstr} holds true; in particular, $u$ belongs to $W^{1,\infty}(S;\R^2)$ by \cite[Theorem~7 and Proposition~9]{friesecke.james.mueller2}. The case $v\in W^{2,2}(S)$ can be treated via approximation arguing as in \cite[Section~6.4]{friesecke.james.mueller2}.
		
		By \cite[Theorem~7]{friesecke.james.mueller2} we can build an isometry
		\begin{equation*}
			\overline y_h(x')=\begin{pmatrix}
				x'+h^{2(\alpha-1)}\overline u_h(x')\\
				h^{\alpha-1}v(x')
			\end{pmatrix}\in W^{2,\infty}_{\rm iso}(S;\R^3),
		\end{equation*}
		where $\overline u_h(x')\in  W^{2,\infty}(S;\R^2)$ satisfies
		\begin{equation}\label{eq:baruconv}
			\overline u_h \xrightharpoonup[h\to 0]{*}u,\qquad \text{ weakly$^*$ in }W^{2,\infty}(S;\R^2).
		\end{equation}
		In particular we can write
		\begin{equation}\label{eq:yhexpansion}
			\overline y_h(x')=\begin{pmatrix}
				x' \\ 0
			\end{pmatrix}+O(h^{\alpha-1}),
		\end{equation}
		where the Landau notation holds in the sense of $W^{2,\infty}(S;\R^2)$.
		
		By defining $b_h(x'):=\partial_1 \overline y_h(x')\wedge \partial_2 \overline y_h(x')\in W^{1,\infty}(S;\R^3)$ we also have
		\begin{subequations}
			\begin{align}
				&b_h(x')= e_3-h^{\alpha-1}\begin{pmatrix}
					\nabla' v(x')^T\\ 0
				\end{pmatrix}+ O(h^{2(\alpha-1}),&&\text{ in }W^{1,\infty}(S;\R^3),\label{eq:bhexp}\\
				&\nabla' b_h(x')=- h^{\alpha-1} \begin{pmatrix}
					(\nabla')^2 v(x')\\ 0
				\end{pmatrix}+ O(h^{2(\alpha-1}),&&\text{ in }L^{\infty}(S;\R^{3\times 2}).\label{eq:nablabhexp}
			\end{align}
		\end{subequations}
		
		Similarly to \eqref{eq:recovery} we now make the ansatz
		\begin{equation*}
			y_h(x):=\overline y_h(x')+h x_3 b_h(x')+h^\alpha \nabla' \overline y_h(x')\overline G x'+ h^{\alpha+1}\left(x_3 \overline c+\frac{x_3^2}{2}d_h(x')\right),
		\end{equation*}
		where the matrix $\overline G$ is as in \eqref{eq:Gbar}, the vector $\overline c\in \R^3$ fulfils (recall \eqref{def:Qpi})
		\begin{equation}\label{eq:cbardef}
			Q_2^\pi(\overline G)=Q_3(\overline G+ \overline c\otimes e_3)+2\pi \overline c_3,
		\end{equation}
		and $d_h\in W^{1,\infty}(S;\R^3)$ satisfies
		\begin{equation}\label{eq:dh}
			d_h\xrightarrow[h\to 0]{} \overline d, \quad \text{in }L^2(S;\R^3),\qquad\text{ and }\qquad h^{1/3}d_h \xrightarrow[h\to 0]{} 0, \quad \text{in }W^{1,\infty}(S;\R^3),
		\end{equation}
		where $\overline d$ realizes (recall \eqref{def:Q2})
		\begin{equation}\label{eq:bard}
			Q_2((\nabla')^2 v(x'))=Q_3(-(\nabla')^2 v(x')+\overline d(x')\otimes e_3).
		\end{equation}
		
		Furthermore, we define
		\begin{equation*}
			R_h(x'):=(\nabla'\overline y_h(x')|b_h(x'))\in SO(3),\quad \overline R_h:=I,\quad\text{and } c_h:=0,
		\end{equation*}
		so that, after some simple computations, $\widetilde y_h\equiv y_h$ and
		\begin{align*}
			u_h(x')=&\frac{1}{h^{2(\alpha-1)}}\int_{-\frac 12}^{\frac 12}({y}_h'(x',x_3)-x')\d x_3=\overline u_h(x')+h^{2-\alpha}(\nabla'\overline y_h(x')\overline G x')'+\frac{h^{3-\alpha}}{24} d'_h(x'),\\
			v_h(x')=&\frac{1}{h^{\alpha-1}}\int_{-\frac 12}^{\frac 12}({y}_h)_3(x',x_3)\d x_3=v(x')+h(\nabla'\overline y_h(x')\overline G x')_3+\frac{h^2}{24}(d_h)_3(x'),\\
			\nabla_h y_h(x)=&R_h(x')+ hx_3(\nabla' b_h(x')|0)+  h^\alpha\big(\nabla'(\nabla' \overline y_h(x')\overline G x')|\overline c+x_3d_h(x')\big)+h^{\alpha+1} \frac{x_3^2}{2}(\nabla'd_h(x')|0),\\
			G_h(x)=& \frac{R_h(x')^T\nabla_h y_h(x')-I}{h^\alpha}\\
			=&R_h(x')^T\Big[ h^{1-\alpha}x_3(\nabla' b_h(x')|0)+\big(\nabla'(\nabla' \overline y_h(x')\overline G x')|\overline c+x_3 d_h(x')\big)+h \frac{x_3^2}{2}(\nabla'd_h(x')|0)\Big] .
		\end{align*}
		Observe that, by exploiting \eqref{eq:yhexpansion} and \eqref{eq:bhexp}, there hold 
		\begin{align*}
			&R_h(x')= I+ O(h^{\alpha-1}),&&\text{ in }W^{1,\infty}(S;\R^{3\times 3}),\\
			&\nabla'(\nabla' \overline y_h(x')\overline G x')= \nabla' \overline y_h(x')\overline G+ O(h^{\alpha-1})=\begin{pmatrix}
				\overline G\\ 0
			\end{pmatrix}+ O(h^{\alpha-1}),&&\text{ in }L^{\infty}(S;\R^{3\times 2}).
		\end{align*}
		By means of \eqref{eq:yhexpansion}, \eqref{eq:nablabhexp} and \eqref{eq:dh}, we thus infer
		\begin{align}
			u_h(x')=& \overline u_h(x')+O(h^{2-\alpha}) ,&&\text{ in }W^{1,\infty}(S;\R^{2}),\nonumber\\
			v_h(x')= &v(x')+ O(h) ,&&\text{ in }W^{1,\infty}(S),\nonumber\\
			\nabla_h y_h(x)=& I+ O(h^{\alpha-1}),&&\text{ in }L^{\infty}(\Omega;\R^{3\times 3}),\nonumber\\
			G_h(x)=& G_1+x_3 (G_2)_h(x')+O(h^{(\alpha-1)\wedge 2/3}),&&\text{ in }L^{\infty}(\Omega;\R^{3\times 3}),\label{eq:exprGh}
		\end{align}
		where
		\begin{align*}
			&G_1= \overline G+\overline c\otimes e_3,\\
			&(G_2)_h(x')= -(\nabla')^2 v(x')+d_h(x')\otimes e_3.
		\end{align*}
		
		Since $\alpha\in (1,2)$, by recalling \eqref{eq:baruconv} and \eqref{eq:dh} we hence deduce that the convergences in \eqref{eq:convuv} hold true with
		\begin{equation}\label{eq:G}
			G(x)=\underbrace{
				\overline G +\overline c\otimes e_3}_{=G_1}+ x_3\underbrace{\left(-(\nabla')^2 v(x')+\overline d(x')\otimes e_3\right)}_{=:G_2(x')}.
		\end{equation}
		By arguing as in \cite[Section 6.3]{friesecke.james.mueller2}, we first obtain
		\begin{equation}\label{eq:1}
			\begin{aligned}
				\lim\limits_{h\to 0}\frac{1}{h^{2\alpha}}\int_\Omega W(\nabla_h y_h(x))\d x&=\lim\limits_{h\to 0}\frac{1}{h^{2\alpha}}\int_\Omega W(I+h^\alpha G_h(x))\d x=\frac 12 \int_\Omega Q_3(G(x)) \d x\\&=\frac{1}{24} \int_S Q_3(G_2(x')) \d x'+\frac 12  Q_3(G_1)|S|.
			\end{aligned}
		\end{equation}
		By recalling \eqref{eq:sviluppodet}, we can then write
		\begin{equation*}
			\frac{1}{h^\alpha}\int_\Omega (\det\nabla_h y_h(x)-1) \d x
			= \int_\Omega \tr G_h(x) \d x+h^\alpha\int_\Omega \iota_2(G_h(x))\d x+h^{2\alpha}\int_\Omega \det G_h(x)\d x.     
		\end{equation*}
		By using \eqref{eq:exprGh} and \eqref{eq:dh}, we now have
		\begin{align*}
			&\int_\Omega \tr G_h(x) \d x=\tr G_1 |S|+O(h^{(\alpha-1)\wedge 2/3}),\\
			&h^\alpha\int_\Omega \iota_2(G_h(x))\d x= O(h^{\alpha-2/3}),\\
			&h^{2\alpha}\int_\Omega \det G_h(x)\d x= O(h^{2\alpha-1}),\\
		\end{align*}
		whence
		\begin{equation}\label{eq:2}
			\lim\limits_{h\to 0} \frac{\pi}{h^\alpha}\int_\Omega (\det\nabla_h y_h(x)-1) \d x=\pi\tr G_1 |S|.
		\end{equation}
		
		By putting together \eqref{eq:1} and \eqref{eq:2}, and recalling \eqref{eq:Gbar}, \eqref{eq:cbardef}, \eqref{eq:bard} and \eqref{eq:G} we finally obtain
		\begin{align*}
			\lim\limits_{h\to 0}\frac{1}{h^{2\alpha}}\overline{\mc E}_h^{\alpha,\pi}(y_h)&=\frac{1}{24} \int_S Q_3(G_2(x')) \d x'+\frac 12  (Q_3(G_1)+2\pi\tr G_1)|S|\\
			&=\frac{1}{24} \int_S Q_2((\nabla')^2v(x')) \d x'+\frac 12 \Big(Q_2^\pi(\overline G)+2\pi\tr\overline G\Big) |S|\\
			&= \mc E_{\rm ben, lin}(u,v)+\frac 12 m_\pi|S|=\mc E^{\alpha,\pi}(u,v),
		\end{align*}
		and we conclude if $\alpha\in (1,2)$.
		
		\textbf{(Case $\boldsymbol{\alpha\ge 2}$).} We now consider the case $\alpha\ge 2$. Fix $(u,v)\in  W^{1,2}(S;\R^2)\times W^{2,2}(S)$; actually, since both $\mc E^\pi_{\rm vK}$ and $\mc E^\pi_{\rm vK, lin}$ are continuous in $W^{1,2}(S;\R^2)\times W^{2,2}(S)$, we may assume without loss of generality that $u$ and $v$ are smooth in $\overline S$.
		
		We follow the ansatz of \cite{friesecke.james.mueller2}, see (119) and (124) therein, and we define
		\begin{equation*}
			y_h(x):={id}_h(x)+\begin{pmatrix}
				h^\alpha u(x')\\
				h^{\alpha-1}v(x')
			\end{pmatrix}- h^\alpha x_3 \begin{pmatrix}
				\nabla' v(x')^T\\
				0
			\end{pmatrix}+h^{\alpha+1}x_3 c(x')+h^{\alpha+1}\frac{x_3^2}{2} d(x'),
		\end{equation*}
		where $c$ and $d$ are smooth functions from $\overline S$ to $\R^3$ which will be chosen later. We also set
		\begin{equation*}
			R_h\equiv \overline R_h:=I,\qquad\text{ and }\qquad c_h=0,
		\end{equation*}
		so that, after some simple computation, one has $\widetilde y_h\equiv y_h$ and
		\begin{align}
			u_h(x')=&\frac{1}{h^\alpha}\int_{-\frac 12}^{\frac 12}({y}_h'(x',x_3)-x')\d x_3=u(x')+\frac{h}{24} d'(x'), \nonumber\\
			v_h(x')=&\frac{1}{h^{\alpha-1}}\int_{-\frac 12}^{\frac 12}({y}_h)_3(x',x_3)\d x_3=v(x')+\frac{h^2}{24}d_3(x'),\nonumber\\
			\nabla_h y_h(x)=&I+h^\alpha\begin{pmatrix}
				\begin{array}{c|c}
					\nabla' u(x')-x_3 (\nabla')^2 v(x') & -h^{-1}\nabla'v(x')^T\\
					\hline h^{-1}\nabla'v(x') & 0
				\end{array}
			\end{pmatrix}+h^\alpha \big(0\,|\,0\,|\, c(x')+x_3 d(x')\big)\nonumber\\
			&+ h^{\alpha+1} x_3 \left(\nabla' c(x')+\frac{x_3}{2}\nabla' d(x')\,\Big|\,0\right),\nonumber\\
			G_h(x)=&\begin{pmatrix}
				\begin{array}{c|c}
					\nabla' u(x')-x_3 (\nabla')^2 v(x') & -h^{-1}\nabla'v(x')^T\\
					\hline h^{-1}\nabla'v(x') & 0
				\end{array}
			\end{pmatrix}+ \big(0\,|\,0\,|\, c(x')+x_3 d(x')\big)\nonumber\\
			&+ h x_3 \left(\nabla' c(x')+\frac{x_3}{2}\nabla' d(x')\,\Big|\,0\right).\label{eq:explGh}
		\end{align}
		
		It is then immediate to check that \eqref{eq:convuva}--\eqref{eq:convuvc} hold true, while \eqref{eq:Ghconv2} is valid for $\sym G_h$ since one gets rid of the terms involving $h^{-1}$.
		
		By arguing as in \cite[Sections 6.1 and 6.2]{friesecke.james.mueller2}, we know that
		\begin{equation}\label{eq:lim1}
			\begin{aligned}
				\lim\limits_{h\to 0}\frac{1}{h^{2\alpha}}\int_\Omega W(\nabla_h y_h)\d x&=\frac 12 \int_\Omega Q_3(A(x')+x_3 B(x')) \d x\\&=\frac 12 \int_S Q_3(A(x')) \d x'+\frac{1}{24} \int_S Q_3(B(x')) \d x',
			\end{aligned}
		\end{equation}
		where
		\begin{equation*}
			A(x')=\begin{cases}\displaystyle
				e'(u(x'))+\frac 12 \nabla' v(x') \otimes\nabla' v(x')+\frac 12 |\nabla' v(x')|^2 e_3\otimes e_3+c(x')\otimes e_3, &\text{if }\alpha=2,\\
				e'(u(x'))+c(x')\otimes e_3, &\text{if }\alpha>2,
			\end{cases}
		\end{equation*}
		and 
		\begin{equation*}
			B(x')=-(\nabla')^2 v(x')+d(x')\otimes e_3.
		\end{equation*}
		
		In order to deal with the pressure term, we first use \eqref{eq:sviluppodet} obtaining
		\begin{equation}\label{eq:equal1}
			\frac{1}{h^\alpha}\int_\Omega (\det\nabla_h y_h(x)-1) \d x
			= \int_\Omega \tr G_h(x) \d x+h^\alpha\int_\Omega \iota_2(G_h(x))\d x+h^{2\alpha}\int_\Omega \det G_h(x)\d x.     
		\end{equation}
		
		By using the explicit expression \eqref{eq:explGh} of $G_h$, and recalling \eqref{eq:iota} and the bound $|\det F|\le |F|^3$, we deduce that
		\begin{align*}
			&\tr G_h(x)=\tr(e'(u(x'))-x_3 (\nabla')^2 v(x'))+c_3(x')+x_3 d_3(x')+ O(h),\\
			& \iota_2(G_h(x))= h^{-2}|\nabla' v(x')|^2+O(h^{-1})=h^{-2}\tr(\nabla' v(x')\otimes \nabla' v(x')) +O(h^{-1}),\\
			&\det G_h(x)=O(h^{-3}),
		\end{align*}
		where the Landau notation $O(\cdot)$ is uniform in $x\in \Omega$.
		
		Since $\alpha\ge 2$, passing to the limit in \eqref{eq:equal1} we thus obtain
		\begin{equation}\label{eq:lim2}
			\lim\limits_{h\to 0}\frac{1}{h^\alpha}\int_\Omega (\det\nabla_h y_h-1) \d x=\begin{cases}\displaystyle
				\int_S\Big(\tr(e'(u)+\nabla' v\otimes \nabla' v)+c_3\Big)\d x',&\text{if }\alpha=2,\\
				\displaystyle\int_S(\tr e'(u)+c_3)\d x',&\text{if }\alpha>2.
			\end{cases}
		\end{equation}
		
		Putting together \eqref{eq:lim1} and \eqref{eq:lim2} we finally infer
		\begin{equation}\label{eq:finalexpr}
			\lim\limits_{h\to 0}\frac{1}{h^{2\alpha}}\overline{\mc E}_h^{\alpha,\pi}(y_h)= \frac{1}{24} \int_S Q_3(-(\nabla')^2 v+d\otimes e_3) \d x'+\mc F^\alpha (u,v),
		\end{equation}
		where
		\begin{align*}
			\mc F^2 (u,v)=&\frac 12\int_S\left(Q_3\left(e'(u)+\frac 12\nabla' v\otimes \nabla' v+\frac 12 |\nabla' v|^2 e_3\otimes e_3+ c\otimes e_3\right)+2\pi c_3\right)\d x'\\
			&+\pi \int_S \left(\div ' u+|\nabla' v|^2\right) \d x',
		\end{align*}
		while for $\alpha>2$ one has
		\begin{align*}
			\mc F^\alpha (u,v)=\frac 12\int_S\left(Q_3\left(e'(u)+ c\otimes e_3\right)+2\pi c_3\right)\d x'+\pi \int_S \div ' u \d x'.
		\end{align*}
		
		We now choose
		\begin{equation}\label{eq:cd}
			\begin{aligned}
				&c(x'):=\begin{cases}
					\displaystyle -\frac 12 |\nabla' v(x')|^2 e_3+\overline c (x'),&\text{if }\alpha=2,\\
					\widetilde c (x'),&\text{if }\alpha>2,
				\end{cases}\\
				&d (x'):= \overline d (x'),
			\end{aligned}
		\end{equation}
		where $\overline c$, $\widetilde c$ and $\overline d$ satisfy (recall \eqref{def:Q2} and \eqref{def:Qpi})
		\begin{align*}
			&Q_2^\pi \left(\!e'(u(x')){+}\frac 12\nabla' v(x')\otimes \nabla' v(x')\!\right)\!=\! Q_3\left(\!e'(u(x')){+}\frac 12\nabla' v(x')\otimes \nabla' v(x'){+} \overline c(x')\otimes e_3\!\right){+}2\pi \overline c_3(x'),\\
			&Q_2^\pi(e'(u(x')))= Q_3(e'(u(x'))+\widetilde c(x')\otimes e_3)+2\pi \widetilde c_3(x'),\\
			& Q_2((\nabla')^2 v(x'))=Q_3(-(\nabla')^2 v(x')+\overline d(x')\otimes e_3).
		\end{align*}
		Since $u$ and $v$ are smooth, we observe that $\overline c$, $\widetilde c$ and $\overline d$, and so $c$ and $d$ in \eqref{eq:cd}, are smooth as well from $\overline S$ to $\R^3$.
		
		With this choice of $c$ and $d$, recalling \eqref{eq:Qrewriting}, we thus deduce that the right-hand side in \eqref{eq:finalexpr} coincides with
		\begin{equation*}
			\frac{\pi^2}{2}\kappa |S|+\begin{cases}
				\mc E^\pi_{\rm vK}(u,v),& \text{if }\alpha=2,\\
				\mc E^\pi_{\rm vK, lin}(u,v),& \text{if }\alpha>2,
			\end{cases}=\mc E^{\alpha,\pi}(u,v),
		\end{equation*}
		and so we conclude the proof.
	\end{proof}
	
	\bigskip
	
	\noindent\textbf{Data availability statement.} The manuscript has no associated data.
	
	\bigskip
	
	\noindent\textbf{Conflict of interest statement.} The authors have no competing interests to declare that are relevant to the content of this article.
	\bigskip
	
	\noindent\textbf{Acknowledgements.}
	The authors wish to thank  M. G. Mora for many fruitful discussions on the topic and D. Padilla-Garza for an inspiring conversation regarding the ansatz \eqref{eq:recovery} in the Kirchoff regime. M.K. was supported by the Czech Science Foundation 
	project 23-04766S and by the M\v{S}MT-WTZ project 8J24AT004.  He is indebted to the Department of Mathematics of the University of Pavia for hospitality during his stay there in 2024.  This research was partialy conducted  during MK's stay at the Tokyo Metropolitan University supported by the Invitational Fellowship S24117 awarded by the JSPS.  F.R. is member of GNAMPA (INdAM), and acknowledges its support through the INdAM-GNAMPA project 2025 \lq\lq DISCOVERIES\rq\rq (CUP E5324001950001).

\end{document}